\def\reff#1{(\ref{#1})}
\def\T{^\top}
\def\eps{\varepsilon}
\def\R{{\mathbb R}}
\def\d{\overline d}
\newtheorem{cor}{Corollary}[section]
\newtheorem{prop}{Proposition}[section]
\newtheorem{thm}{Theorem}[section]
\newtheorem{example}{Example}[section]
\newtheorem{lemma}{Lemma}[section]
\newtheorem{rem}{Remark}[section]
\newenvironment{proof}{\noindent {\bf Proof. }}{\hfill $\Box$ \\ \medskip}
\def\reff#1{(\ref{#1})}
\def\qf#1#2{#1\T  #2 #1}
\def\irg#1#2{{ \in\! [{#1}\! : \! {#2}]}}
\def\T{^\top}
\def\a#1{\alpha _{#1}}
\def\DD{{\mathcal D}}
\def\CC{{\mathcal C}}
\def\ker{\mbox{ker }}
\def\tr{\mbox{trace }}
\def\beq#1{\begin{equation}\label{#1}}
\def\eeq{\end{equation}}
\def\bep{\begin{proof}}
\def\ep{\end{proof}}
\def\lk{\left\{}
\def\rk{\right \} }
\def\du#1#2{\langle {#1} , {#2} \rangle}
\def\norm#1{ \| #1 \|}
\def\reff#1{(\ref{#1})}
\def\qf#1#2{#1\T  #2 #1}
\def\T{^\top}
\def\a#1{\alpha _{#1}}
\def\DD{{\mathcal D}}
\def\CC{{\mathcal C}}
\def\ker{\mbox{ker }}
\def\tr{\mbox{trace }}
\def\beq#1{\begin{equation}\label{#1}}
\def\eeq{\end{equation}}
\def\bep{\begin{proof}}
\def\ep{\end{proof}}
\def\lk{\left\{}
\def\rk{\right \} }
\def\du#1#2{\langle {#1} , {#2} \rangle}
\def\norm#1{ \| #1 \|}
\def\lm{\emptyset}
\def\ignore#1{}
\def\bea#1{\begin{array}{#1}}
\def\ea{\end{array}}
\def\conv{{\mbox{conv }}}
\def\DD{{\mathcal K}}
\def\CC{{\mathcal C}}
\def\ker{\mbox{ker }}
\def\tr{\mbox{trace}}
\def\a{{\mathsf a}}
\def\b{{\mathsf b}}
\def\d{{\mathsf d}}
\def\o{{\mathsf o}}
\def\q{{\mathsf q}}
\def\r{{\mathsf r}}
\def\s{{\mathsf s}}
\def\u{{\mathsf u}}
\def\v{{\mathsf v}}
\def\x{{\mathsf x}}
\def\y{{\mathsf y}}
\def\z{{\mathsf z}}
\def\eps{\varepsilon}
\def\Ab{{\mathsf A}}
\def\Bb{{\mathsf B}}
\def\Fb{{\mathsf F}}
\def\Hb{{\mathsf H}}
\def\Ib{{\mathsf I}}
\def\Jb{{\mathsf J}_0}
\def\Mb{{\mathsf M}}
\def\Ob{{\mathsf O}}
\def\Qb{{\mathsf Q}}
\def\Rb{{\mathsf R}}
\def\Sb{{\mathsf S}}
\def\Xb{{\mathsf X}}
\begin{document}
\begin{titlepage}
{\Large \raggedright \sf
Extended Trust--Region Problems with One or Two Balls:\\[0.5em] Exact Copositive and Lagrangian Relaxations}  
\bigskip
\begin{flushleft}
  \normalsize
Immanuel M. Bomze \\ [1.2ex]
ISOR and VCOR, University of Vienna, Austria\\ [2ex]
Vaithilingam Jeyakumar\\ [1.2ex]  School of Mathematics and Statistics, University of New South Wales, Sydney, Australia\\ [2ex]
Guoyin Li\\ [1.2ex]  School of Mathematics and Statistics, University of New South Wales, Sydney, Australia\\ [3ex]
\end{flushleft}
\rule{\textwidth}{0.75pt} \vskip0.4cm
\begin{abstract}
\small
We establish a geometric condition guaranteeing exact copositive relaxation for the nonconvex quadratic optimization problem under two quadratic and several linear constraints, and present sufficient conditions for global optimality in terms of generalized Karush-Kuhn-Tucker multipliers. The copositive relaxation is tighter than the usual Lagrangian relaxation. We illustrate this by providing a whole class of quadratic optimization problems that enjoys exactness of copositive relaxation while the usual Lagrangian duality gap is infinite. Finally, we also provide verifiable conditions under which both
the usual Lagrangian relaxation and the copositive relaxation are exact for an extended CDT (two-ball trust-region) problem. Importantly, the sufficient conditions can be verified by solving linear optimization problems.

\end{abstract}
\noindent
{\bf Key words:} Copositive matrices, non-convex optimization, quadratic optimization, quadratically constrained problem, global optimality condition, relaxation.
\bigskip
\bigskip

\centerline{Revised Version: September 15, 2017}
\normalsize

\end{titlepage}
\newpage
\setcounter{page}{1}

\section{Introduction}
Consider the following nonconvex quadratic optimization problem, which is referred to as the extended trust region problem:
\begin{eqnarray*}
 \mathrm{(P)} & \displaystyle \min_{\x \in \mathbb{R}^n} & \x\T\Qb_0 \x+2 \q_0\T\x \\
& \mbox{ subject to } & \x\T\Qb_1 \x +2 \q_1\T\x \le 1  \\
& & {\norm {\Ab\x-\a}}^2 \le 1 \\
& & \Bb\x \le \b,
\end{eqnarray*}
where $\Qb_0,\Qb_1$ are $(n \times n)$ symmetric matrices, $\Ab$ is an $(\ell \times n)$ matrix, $\Bb$ is an $(m \times n)$ matrix, $\a \in \mathbb{R}^\ell$, $\b \in \mathbb{R}^m$
and $\q_0,\q_1 \in \mathbb{R}^n$. Model problems of this form arise from robust optimization problems  under matrix
norm or polyhedral data uncertainty \cite{Beck06,Jeya13a} and the application of the trust region method  \cite{Conn00} for
solving constrained optimization problems, such as nonlinear optimization problems
with nonlinear and linear inequality constraints \cite{Bomz13b,Powell}.  It covers many important and challenging quadratic optimization (QP) problems such as those with box constraints; trust region problems with additional linear constraints; and the CDT (Celis-Dennis-Tapia or two-ball trust-region) problem \cite{AiZh09,Bomz13b,Bure13a,Locatelli,Yang13a}.  In general, with no further structure on the additional linear constraints $\Bb \x\le\b$, the model problem~(P) is NP-hard as it encompasses the quadratic optimization problem with box constraints.

In the special case where $\Qb_1$ is the identity matrix, $\Ab,\Bb$ are zero matrices and $\a,\b,\q_1$ are zero vectors, the model problem~(P) reduces to the well-known trust-region model.
It has been extensively studied from both theoretical and algorithmic points of view \cite{Jeya09a,Yuan90}. The trust-region problem enjoys exact Lagrangian relaxations. Moreover, its solution can be found by solving a dual Lagrangian system or, equivalently, a semidefinite optimization problem (SDP). Unfortunately, these nice features do not continue to hold for the more general  extended trust-region problem~(P); see~\cite{Jeya13a}. In fact, it has been shown that exactness of Lagrangian (or SDP) relaxation can fail for the CDT problem, or for the trust region problem with only one additional linear inequality constraint.

Recently, copositive optimization has emerged as one of the important tools for studying nonconvex quadratic optimization problems. Copositive optimization is a special
case of convex conic optimization (namely, to minimize a linear function over a cone subject to linear constraints). By now, equivalent copositive reformulations for many important problems are known, among them (non-convex, mixed-binary, fractional) quadratic optimization problems under a mild assumption~\cite{Amar15,Amar11,Bure09a}, and some special optimization problems under uncertainty \cite{Arde16a,Hana16a,Nata11,Xu16a}. In particular, it has been shown in \cite{Bomz13c} that, for quadratic optimization problems with additional nonnegative constraints, copositive relaxations (and its tractable approximations) provides a tighter bound than the usual Lagrangian relaxation.
On the other hand, the techniques in \cite{Bomz13c} are not directly applicable because our model problem does not require the variables to be nonnegative.

In light of rapid evolution of this field, in this paper, we introduce a new copositive relaxation for the extended trust region problem~(P), and present two significant contributions to copositive optimization:
\begin{itemize}
\item
We establish a geometric condition guaranteeing exact copositive relaxation for the nonconvex quadratic optimization problem~(P). We also  present sufficient conditions for global optimality in terms of generalized Karush-Kuhn-Tucker multipliers extending the global optimality conditions obtained for CDT problems \cite{Bomz13b}.
Moreover, we provide a class of quadratic optimization problems that enjoys exactness of the copositive relaxation while the usual Lagrangian duals for these problems yield trivial lower bounds with infinite gaps.

\item In the special case, where (P) is an extended CDT (or two-ball trust region, TTR) problems, we also
derive simple verifiable sufficient conditions, which is independent of the geometric conditions, ensuring both exact copositive relaxation and exact Lagrangian relaxations. In particular, the sufficient conditions can be checked by solving a linear optimization problem.
\end{itemize}

The paper is organized as follows: In Section 2, we first recall notation and terminology, and present some basic facts on copositivity. In Section 3, we introduce
the copositive relaxation for~(P) and its semi-Lagrangian reformulation. We also provide a global optimality condition and prove an
exactness result for this relaxation. In Section 4, we examine the extended CDT problem and provide simple conditions ensuring the tightness of both the copositive relaxation and the usual Lagrangian relaxation. In the appendix, we provide details on how copositive relaxation problems can be approximated by hierarchies of semidefinite and/or linear optimization problems.

\section{Preliminaries}

We abbreviate by $[m\! : \! n]:=\lk m, m+1, \ldots , n\rk$ the integer range between two integers $m,n$ with $m\le n$.
By bold-faced lower-case letters we denote vectors in $n$-dimensional Euclidean space $\R^n$, by bold-faced upper case letters matrices, and by $\T$ transposition. The positive orthant is denoted by
$\R ^n_+ :=\lk \x\in \R ^n: x_i \ge 0\mbox{ for all }i\irg 1n\rk$.
$\Ib_n$ is the $n\times n$ identity matrix.
The letters $\o$ and $\Ob$ stand for zero vectors, and zero matrices, respectively, of appropriate orders.  The set of all $n\times n$ matrices is denoted by $\R^{n\times n}$, and the closure (resp. interior) of a set $S\subset \R^n$ by ${\rm cl}(S)$ (resp. ${\rm int}~S$).

For a given symmetric matrix $\Hb=\Hb\T$, we denote the fact that $\Hb $ is positive-semidefinite by $\Hb \succeq \Ob $.
Sometimes we write instead "$\Hb$ is psd." Denoting the smallest eigenvalue of any symmetric matrix $\Mb=\Mb\T$ by $\lambda_{\rm min}(\Mb)$, we thus have $\Hb \succeq \Ob $ if and only if
$\lambda_{\rm min}(\Hb)\ge 0$. Linear forms in symmetric matrices $\Xb$ will play an important role in this paper; they are expressed by Frobenius duality $\du \Sb\Xb  = \tr (\Sb\Xb )$, where $\Sb=\Sb\T$ is another symmetric matrix of the same order as $\Xb$.
By $\Ab\oplus \Bb$ we denote the direct sum of two square matrices:
\beq{defjo}\Ab\oplus \Bb = \left[\bea{cc} \Ab &\Ob \\ \Ob\T &\Bb \ea\right ]\, ,\quad\mbox{and in particular we will use}\quad \Jb := 1\oplus \Ob = \left[ \bea{cc} 1  &\o\T \\ \o &\Ob \ea\right ]\, .\eeq
For any optimization problem, say (Q), we denote by val(Q) its optimal objective value (attained or not). Consider a quadratic function $q(\x) = \qf\x\Hb - 2\d\T\x + \gamma$  defined on $\R^{n}$, with $q(\o)=\gamma$, $\nabla q(\o)=-2\d$ and $D^2 q(\o)= 2\Hb$ (the factors $2$ being here just for ease of later notation). For this $q$ we define the {\em Shor relaxation matrix~\cite{Shor87}} as
\beq{defm}\Mb(q) :=  \left[\bea{cc}  \gamma  &-\d\T \\ -\d &\Hb \ea\right ]\, .\eeq
Then  $q(\x) \ge 0$ for all $\x \in \R^{n}$ if and only if $\Mb(q) \succeq \Ob$.

Given any cone $\CC$ of symmetric $n\times n$ matrices,
$$\CC^\star :=\lk \Sb = \Sb \T  \in \R^{n\times n } : \du \Sb\Xb \ge 0 \mbox{ for all }\Xb\in \CC\rk $$
denotes the dual cone of $\CC$. For instance, if $\CC = \lk \Xb=\Xb\T  \in \R^{n\times n } : \Xb\succeq \Ob\rk$, then $\CC^\star = \CC$ itself, an example of a {\em self-dual cone}. Trusting the sharp eyes of our readers, we chose a notation with subtle differences between the five-star denoting a dual cone, e.g., $\CC^\star$, and the six-star, e.g. $z^*$, denoting optimality.

The key notion used below is that of {\em copositivity}. Given a symmetric $n\times n$ matrix $\Qb$, and a closed, convex cone $\Gamma \subseteq\R^n$,
we say that
$$\bea{rl} \Qb \mbox{ is }\Gamma\mbox{-copositive if } &\qf\v\Qb \ge 0\mbox{ for all }\v\in \Gamma
\, , \quad\mbox{and that }\\[0,3em]
\Qb \mbox{ is strictly } \Gamma\mbox{-copositive if } &\qf\v\Qb > 0\mbox{ for all }\v\in \Gamma\setminus\lk\o\rk\, .\ea$$
Strict copositivity generalizes positive-definiteness (all eigenvalues strictly positive) and copositivity generalizes positive-semidefiniteness (no eigenvalue strictly negative) of a symmetric matrix. Checking copositivity is NP-hard for most cones $\Gamma$ of interest,  see~\cite{Dick11b,Murt87} for the classical case $\Gamma=\R^n_+$ studied already by Motzkin~\cite{Motz52b} who coined the notion back in 1952. In the sequel, we will use "copositive" synonymous for "$\R^n_+$-copositive" in Motzkin's sense.

The set of all $\Gamma$-copositive matrices forms a closed, convex matrix cone, the {\em copositive cone}
$$\CC^\star _\Gamma:=\lk \Qb =\Qb\T   \in \R^{n\times n}  : \Qb\mbox{ is }\Gamma\mbox{-copositive}\rk$$
with non-empty interior  ${\rm int}~\CC^\star_\Gamma$, which exactly consists of all strictly $\Gamma$-copositive matrices.
However, the cone $\CC^\star_\Gamma$ is not self-dual. Rather one can show that $\CC^\star_\Gamma$ is the dual cone of
$$\CC_\Gamma := \lk \Xb = \Fb\Fb\T : \Fb \mbox{ has } {n+1\choose 2} \mbox{ columns in }\Gamma\rk\, , $$
the cone of $\Gamma$-{\em completely positive (cp)} matrices. Note that the factor matrix $\Fb$ has many more columns than rows.  A perhaps more amenable representation is
$$\CC_\Gamma = \conv \lk \x\x\T : \x\in \Gamma\rk\, , $$
where $\conv S$ stands for the convex hull of a set $S\subset\R^n$. Caratheodory's theorem then elucidates the bound ${n+1\choose 2}$ on the number of columns in $\Fb$ above, which is not sharp in the classical case $\Gamma=\R^n_+$ but asymptotically tight~\cite{Bomz14b,Shak13b}.

Next, we specify a result on reducing $\Upsilon$-copositivity with $\Upsilon=\R^p_+\times \R^{n}$ to a combination of psd and classical copositivity conditions. This
result will be used later on.

\begin{lemma}\label{schur}
Let $\Upsilon=\R^p_+\times \R^{n}$ and partition a $(p+n)\times (p+n)$ matrix $\Mb$ as follows:
$$\Mb = \left [\bea{rl} \Rb &\Sb\T \\ \Sb &\Hb\ea\right ]\quad \mbox{where}\quad \Rb\mbox{ is a } p\times p\mbox{-matrix.}$$
Then $\Mb $ is $\Upsilon$-copositive if and only if the following two conditions hold:
\begin{description}
\item[(a)] $\Hb$ is positive semidefinite and $\Hb\Hb^\dag \Sb=\Sb$, i.e., $\ker \Hb \subseteq \ker \Sb\T$;
\item[(b)] $\Rb-\Sb\T \Hb^\dag \Sb$ is $(\R^p_+-)$copositive.
\end{description}
Here $\Hb^\dag$ is the Moore-Penrose pseudoinverse of $\Hb$.
\end{lemma}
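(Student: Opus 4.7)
My plan is to prove the lemma by the classical "partial minimization / generalized Schur complement" argument, rewriting $\qf{(\u,\v)}{\Mb}$ as a function of $\u\ge\o$ after minimizing out the unconstrained block $\v$. Concretely, for $\u\in\R^p_+$ and $\v\in\R^n$ I will use the decomposition
\[
\qf{(\u,\v)}{\Mb} \;=\; \qf\u\Rb + 2\v\T \Sb\u + \qf\v\Hb .
\]
The statement will follow by showing that (a) is exactly the condition under which the inner minimization in $\v$ is well-posed (i.e., bounded below and attained), while (b) is exactly the condition that the resulting infimum, as a quadratic form in $\u$, is nonnegative on $\R^p_+$.

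For the forward direction I would first specialize to $\u=\o$: then $\Upsilon$-copositivity forces $\qf\v\Hb\ge 0$ for every $\v\in\R^n$, giving $\Hb\succeq\Ob$. Next, fix an arbitrary $\u\in\R^p_+$ and assume for contradiction that $\Sb\u\notin\range(\Hb)=(\ker\Hb)^\perp$. Writing $\Sb\u=\r+\s$ with $\o\neq\s\in\ker\Hb$, the choice $\v=t\s$ gives $\qf{(\u,\v)}{\Mb}=\qf\u\Rb+2t\norm\s^2$, which tends to $-\infty$ as $t\to -\infty$, contradicting $\Upsilon$-copositivity. Hence $\Sb\u\in\range(\Hb)$ for every $\u\in\R^p_+$; since $\R^p_+$ spans $\R^p$, this yields $\range(\Sb)\subseteq\range(\Hb)$, which is equivalent to $\Hb\Hb^\dag\Sb=\Sb$ and to $\ker\Hb\subseteq\ker\Sb\T$ by the standard property of the Moore--Penrose inverse that $\Hb\Hb^\dag$ is the orthogonal projector onto $\range(\Hb)$. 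With (a) established, I minimize $\v\mapsto \qf\v\Hb+2\v\T\Sb\u$ over $\R^n$: the optimum is attained at $\v^{\ast}=-\Hb^\dag\Sb\u$ (since $\Sb\u\in\range(\Hb)$), and a short computation using $\Hb^\dag\Hb\Hb^\dag=\Hb^\dag$ yields
\[
\min_{\v\in\R^n}\qf{(\u,\v)}{\Mb} \;=\; \qf\u{(\Rb-\Sb\T\Hb^\dag\Sb)} .
\]
Thus $\Upsilon$-copositivity of $\Mb$ forces $\qf\u{(\Rb-\Sb\T\Hb^\dag\Sb)}\ge 0$ for all $\u\in\R^p_+$, i.e.\ (b).

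For the converse direction the same computation is run in reverse: given (a), the quadratic $\qf\v\Hb+2\v\T\Sb\u+\qf\u\Rb$ is convex in $\v$ and attains its minimum $\qf\u{(\Rb-\Sb\T\Hb^\dag\Sb)}$ at $\v^{\ast}=-\Hb^\dag\Sb\u$, and this minimum is nonnegative for every $\u\in\R^p_+$ by (b); consequently $\qf{(\u,\v)}{\Mb}\ge 0$ on all of $\Upsilon$.

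The only delicate step is the handling of the pseudoinverse: verifying that $\Sb\u\in\range(\Hb)$ implies $\Hb\v^{\ast}=-\Sb\u$ and that $\Sb\T\Hb^\dag\Hb\Hb^\dag\Sb=\Sb\T\Hb^\dag\Sb$, both of which follow directly from the standard identities $\Hb\Hb^\dag\Hb=\Hb$ and $\Hb^\dag\Hb\Hb^\dag=\Hb^\dag$. Everything else is elementary. I expect no further obstacle.
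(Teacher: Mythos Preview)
Your proposal is correct and is precisely the generalized Schur-complement / partial minimization argument that the paper has in mind: the paper's own proof consists solely of a pointer to~\cite[Thm.~3.1]{Bomz13b}, which proceeds exactly by minimizing the unconstrained block out and identifying (a) as the well-posedness condition and (b) as nonnegativity of the resulting Schur complement on $\R^p_+$. There is nothing to add; your handling of the range condition via $\R^p_+$ spanning $\R^p$ and of the pseudoinverse identities is the right way to close the argument.
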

\bep The argument is an easy extension of the arguments that led to~\cite[Thm.3.1]{Bomz13b}. \ep


\section{Relaxations for extended trust region problems}\label{semi}

The problem we study here is given by
\begin{eqnarray*}
 \mathrm{(P)} & \displaystyle \min_{\x \in \mathbb{R}^n} & f_0(\x) := \x\T\Qb_0 \x+2 \q_0\T\x \\
& \mbox{ subject to } & f_1(\x):= \x\T\Qb_1 \x +2 \q_1\T\x -1 \le 0  \\
& & f_2(\x):= {\norm {\Ab\x-\a}}^2 -1 \le 0 \\
& & \Bb\x \le \b.
\end{eqnarray*}
Throughout this paper, we assume that the feasible set of  problem~(P) is non-empty.
The model problem (P) can be reformulated as
\beq{qqpp}
z^*:= \inf\lk f_0(\x) : \x\in F\cap P\rk\quad\mbox{ with }P:=\lk \x\in \R^n : \Bb\x \le \b\rk\, ,
\eeq
where $F:=\lk \x \in \R^n: f_i(\x) \le 0 \, , \, i=1,2\rk$, $\b\in \R^p$ and $\Bb$ is a $p\times n$ matrix.

For our approach, it will be convenient to introduce slack variables $s_j:= b_j - (\Bb\x)_j$ for all $j\irg 1p$, arriving at new primal-feasible points $\y= (\s, \x)\in \R^p_+\times \R^n$, in other words, to replace $P$ with
$$\bar P :=\lk \y= (\s, \x)\in \R^p_+\times \R^n : \bar\Bb \y = \b\rk$$
with the $p\times (p+n)$-matrix $\bar\Bb := [\Ib_p \,| \, \Bb ]$.

We now need to extend all original functions in the obvious way, namely $\bar f_i (\y) =\bar f_i(\s,\x) = f_i(\x)$ by writing $\bar\Qb _i =\Ob\oplus \Qb_i$, i.e., adding $p$ zero rows and $p$ zero columns to $\Qb_i$, arriving at symmetric matrices of order $p+n$; likewise we define $\bar \q_i \T = [\o\T,\q_i\T]$. Finally, by introducing another quadratic constraint, defining $\bar \Qb_{3}= \bar\Bb\T\bar\Bb$, $\bar \q_{3}= \bar\Bb\T\b$ and $c_{3}=\b\T\b$, we rephrase the $p$ linear constraints $\bar\Bb\y=\b$ into one quadratic constraint $\bar f_{3}(\y)={\norm{\bar\Bb\y- \b}}^2 { \le 0}$.

In this way, the original problem~\reff{qqpp} is rephrased in a somehow standardized form, namely
\beq{stand}z^* = \inf\lk \bar f_0(\y) : \bar f_i(\y) \le 0\, , \, i\irg 13\, , \, \y = (\s,\x)\in \R^p_+\times \R^n\rk\, .\eeq

The optimal value $z^*$  of~\reff{qqpp} need not be attained, and it could also be equal to $-\infty$ (in the unbounded case) or to $+\infty$ (in the infeasible case).
Considering $\Qb_i=\Ob$ would also allow for linear inequality constraints. But it is often advisable to discriminate the functional form of constraints, and we will adhere to this principle in what follows.

\subsection*{Copositive relaxation}
Next, we introduce a copositive relaxation for (P). Let $\y=(\s,\x) \in \mathbb{R}^p \times \mathbb{R}^n$. Now consider multipliers $\u\in \R^{3}_+$ of the inequality constraints $\bar f_i(\y)= f_i(\x)\le 0$, $i\irg 13$, and $\v\in \R^p_+$ for the sign constraints $\s\in \R ^p_+$.
 Then we define the full Lagrangian function for problem~\reff{stand} as
$$L(\y;\u,\v) := \bar f_0(\y) + u_1 \bar f_1(\y)+u_2 \bar f_2(\y)+u_3 \bar f_3(\y) - \v\T\s\, .$$
Let $\Upsilon  = \R^{p+1}_+\times \R^n$. Recall that the matrix $\Jb$ and the Shor relaxation matrix $\Mb(q)$ for a quadratic function $q$ are given as in \eqref{defjo} and \eqref{defm} respectively. Then the matrix $\Mb(L (\cdot;\u,\o ))-\mu\Jb$ can be written as below:
\beq{explsl}
\left[\bea{ccc}
-u_1-u_2 + u_{3}{\norm \b}^2-\mu &- u_{3}\b\T & \q_0\T+u_1 \q_1\T-u_2 \a\T \Ab - u_{3}\b\T\Bb \\
- u_{3}\b & u_{3}\Ib_p & u_{3}\Bb \\
\q_0+u_1 \q_1-u_2  \Ab\T \a - u_{3}\Bb\T \b  & u_{3}\Bb\T & \Hb_u + u_{3}\Bb\T\Bb
\ea
\right ]
\eeq
where $\Hb_u = \Qb_0 + u_1 \Qb_1+ u_2 \Ab\T\Ab$ is the Hessian of the Lagrangian function.
We now associate a copositive relaxation for (P)  as follows:
\beq{copd}
{\rm (COP)} \quad z_{\rm COP}^*:= \sup\lk \mu : (\mu,\u )\in \R\times \R^{3}_+  , \; \Mb(L (\cdot;\u,\o ))-\mu\Jb \mbox{ is }\Upsilon\mbox{-copositive}\rk\, ,\eeq
It is worth noting that, unlike in~\cite{Bomz13c}, our model problem does not require the variables to be nonnegative, and so the techniques in constructing a copositive relaxation as in \cite{Bomz13c} cannot be applied directly. Here we achieve this task by introducing nonnegative slack variables.

An important observation  is that the copositive relaxation can be equivalently reformulated as a semi-Lagrangian dual
problem of the problem (P).
Recall that the usual Lagrangian dual (or Lagrangian relaxation) of (P) is given by
\beq{defzfulld}
z_{\rm LD}^* := \sup\lk \Theta (\u,\v) : (\u,\v)\in \R^{3}_+\times \R^p_+\rk\, ,\eeq
where $\Theta (\u,\v) :=\inf \lk L (\y;\u,\v) : \y\in \R^{p+n}\rk$. A form of partial Lagrangian relaxation called semi-Lagrangian of (P) (see \cite{Bomz13c} and the references therein) is given by
\beq{defzsemi}
z_{\rm semi}^* := \sup\lk \Theta_{\rm semi}(\u): \u\in \R^{3}_+\rk\, .\eeq
where $\Theta_{\rm semi}(\u) :=\inf \lk L (\y;\u,\o) : \y\in \R^p_+\times \R^n\rk$. The relation between copositive relaxation, full and semi-Lagrangian bounds can be summarized in the following chain of inequalities
\beq{eq:chain}z_{\rm LD}^*\le z_{\rm COP}^*=z_{\rm semi}^* \le z^*.\, \eeq We note that the relation $z_{\rm LD}^*\le z_{\rm semi}^* \le z^*$ follows by the construction, and the equality
$z_{\rm COP}^*=z_{\rm semi}^*$ follows by adapting the techniques in \cite[Lemma 2.1]{Bomz13c} to the polyhedral cone $\R^p_+\times \R^n$ (see also \eqref{eq:00p} later for a detailed proof).


We now illustrate that, in general, a copositive relaxation can provide a much tighter bound for the model problem (P) than the usual Lagrangian dual.
Indeed, in the following example, we see that the copositive relaxation is tight while the usual
Lagrangian dual yields a trivial lower bound which has infinite gap. As we will see later (Proposition \ref{remark:3.2}), one can indeed construct a whole class of
quadratic optimization problem with exact copositive relaxation but infinite Lagrangian duality gap.
\begin{example}{\bf (Copositive relaxation vs Lagrangian relaxation)} \label{ex:5.1} Consider the following nonconvex quadratic optimization problem with simple linear inequality constraints
\begin{eqnarray*}
& \min & q(\x):= \frac{1}{2}x_1^2+2x_1x_2+ x_2^2 \\
& \mbox{ s.t. } & x_1 \ge 0, \ x_2 \ge 0.
\end{eqnarray*}
Clearly, the objective function is not convex and the optimal value of this problem is $z^*=0$.
We next observe that this problem can be converted to our standard form as
\begin{eqnarray*}
& \min\limits_{(\x,\s)\in \mathbb{R}^2 \times \mathbb{R}^2} & \frac{1}{2}x_1^2+2x_1x_2+ x_2^2 \\
& \mbox{ s.t. } & (x_1-s_1)^2+(x_2-s_2)^2 \le 0 \\
& & s_1 \ge 0, s_2 \ge 0.
\end{eqnarray*}
Then the copositive relaxation reads
\[
z_{\rm COP}^*=\sup_{\mu \in \R, u \ge 0}\lk  \mu : \left[\bea{ccccc}
 -\mu & 0& 0&0 &0 \\
0 & u & 0 & -u & 0 \\
0 & 0 & u & 0 & -u \\
0 & -u & 0 & \frac{1}{2}+u & 1 \\
0 & 0 & -u & 1 & 1+u
\ea
\right ] \;  \mbox{ is }(\mathbb{R}^3_+ \times \mathbb{R}^2) \mbox{-copositive}\rk
\]
Clearly, from the copositivity requirement, $z_{\rm COP}^* \le 0$. Moreover,
it can be verified from Lemma~\ref{schur} that, for $\mu=0$ and $u=1$, the matrix
\[
\left[\bea{ccccc}
0 & 0& 0&0 &0 \\
0 & 1 & 0 & -1 & 0 \\
0 & 0 & 1 & 0 & -1 \\
0 & -1 & 0 & \frac{3}{2} & 1 \\
0 & 0 & -1 & 1 & 2
\ea
\right ] \;  \mbox{ is }(\mathbb{R}^3_+ \times \mathbb{R}^2) \mbox{-copositive.}
\]
Thus, $z_{\rm COP}^*=z^*=0$.

Next we show that $z_{\rm LD}^*=-\infty$. To see this, we only need to show that for each fixed $u \ge 0$ and $\v=(v_1,v_2)\T \in \R_+^2$, we have
\[
\inf_{(\x,\s) \in \mathbb{R}^2 \times \mathbb{R}^2}\{  [\frac{1}{2}x_1^2+2x_1x_2+ x_2^2] + u [(-x_1+s_1)^2+(-x_2+s_1)^2]+v_1 s_1+v_2s_2\}=-\infty.
\]
Indeed, taking $\x=(-t,t)$ and $\s=(t,-t)$ we see that, as $t\rightarrow +\infty$ ,
\begin{eqnarray*}
& & [\frac{1}{2}x_1^2+2x_1x_2+ x_2^2] + u [(-x_1+s_1)^2+(-x_2+s_1)^2]+v_1 s_1+v_2s_2\\
&=& -\frac{1}{2} t^2 +t(v_1-v_2) \rightarrow -\infty\, .
\end{eqnarray*}
\end{example}

\vspace{-0.2cm}

\section{Tightness of copositive relaxation}\label{semitight}

We consider, for $\y=(\s,\x)\in \R^p_+\times \R^n$,  the full Lagrangian function
$$ L(\y; \u,\v) = \bar f_0(\y)+\sum_{i=1}^3 u_i \bar f_i(\y) - \v\T\s, \,  \quad (\u,\v)\in \R^{3}_+\times \R^p\, .$$
As in~\cite{Bomz13c}, let us say that the pair
$( \x ;\u,\v)\in (F\cap P)\times \R^{3}_+\times \R^p$ is a {\em generalized KKT pair} for~\reff{qqpp} if and only if,  for $\s=\b-\Bb \x$ and $\y=(\s,\x)$, it satisfies both the first-order conditions
$\nabla_\y   L(\y; \u, \v) = \o$ and as well the complementarity conditions
$v_k s_k =0$ for all $k\irg 1p$  and $u_i \bar f_i(\y)=0$ for all $i\irg 1{3}$, but without requiring $v_k\ge 0$.

\vspace{-0.2cm}
\subsection*{Geometric conditions for exact copositive relaxation}
Next, we provide a geometric condition ensuring the exactness of the copositive relaxation which
does not rely on the information of KKT pairs. To do this,  denote $\y=(\s,\x)$ and let $\bar{f}_0(\y)=\x\T\Qb_0\x+2 \q_0\T\x$, $\bar{f}_1(\y) = \x\T\Qb_1x+2\q_1\T \x-1$,  $\bar{f}_2(\y) = {\norm {\Ab\x-\a}}^2-1$  and
$\bar{f}_3(\y)=\|\Bb\x+\s-\b\|^2$.
\begin{thm}
For the extended trust region problem {\rm(P)}, let  $$\Omega:=\{[\bar{f}_0(\y),\bar{f}_1(\y),\bar{f}_2(\y),\bar{f}_3(\y)]\T: \y \in  \mathbb{R}^p_+ \times 	 \mathbb{R}^n \}+\mathbb{R}^{4}_+.$$ Suppose that
$\Omega$ is closed and convex.
Then we have $z_{\rm COP}^*=z^*$.
\end{thm}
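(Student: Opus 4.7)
The plan is to use the chain \reff{eq:chain}, $z_{\rm LD}^* \le z_{\rm COP}^* = z_{\rm semi}^* \le z^*$, to reduce the theorem to proving $z_{\rm semi}^* \ge z^*$. The case $z^* = -\infty$ is trivial, so I assume $z^* \in \R$, fix any $\alpha < z^*$, and aim to show $z_{\rm semi}^* > \alpha$; letting $\alpha \uparrow z^*$ will close the argument.

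The first step is to verify that the point $(\alpha, 0, 0, 0)\T$ lies outside $\Omega$. Indeed, membership would supply $\y = (\s, \x) \in \R^p_+ \times \R^n$ with $\bar f_0(\y) \le \alpha$ and $\bar f_i(\y) \le 0$ for $i = 1, 2, 3$. Because $\bar f_3(\y) = \norm{\Bb\x + \s - \b}^2 \ge 0$ unconditionally, the third inequality collapses to $\bar f_3(\y) = 0$, hence $\Bb\x = \b - \s \le \b$; thus $\x$ would be feasible for (P) with $f_0(\x) \le \alpha < z^*$, a contradiction.

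The second step applies strong separation: since $\Omega$ is closed and convex and misses $(\alpha, 0, 0, 0)\T$, there exist $\lll = (\lambda_0, \lambda_1, \lambda_2, \lambda_3)\T \ne \o$ and $\gamma \in \R$ with $\lambda_0 \alpha < \gamma \le \lll\T \omega$ for every $\omega \in \Omega$. Since $\R^4_+$ is contained in the recession cone of $\Omega$ by construction, a standard ray-to-infinity argument on the separating inequality forces $\lll \in \R^4_+$. Assuming $\lambda_0 > 0$ for the moment, I rescale to $\lambda_0 = 1$ and specialize the inequality to the image points $\omega = [\bar f_0(\y), \ldots, \bar f_3(\y)]\T$ as $\y$ ranges over $\R^p_+ \times \R^n$, obtaining $\bar f_0(\y) + \sum_{i=1}^3 \lambda_i \bar f_i(\y) \ge \gamma > \alpha$, whence $\Theta_{\rm semi}(\lambda_1, \lambda_2, \lambda_3) \ge \gamma > \alpha$ and consequently $z_{\rm semi}^* > \alpha$.

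The main obstacle is ruling out the degenerate alternative $\lambda_0 = 0$. I will exploit the standing non-emptiness of the feasible set of (P): for any feasible $\x^0$, the slack-augmented vector $\y^0 := (\b - \Bb\x^0, \x^0)$ lies in $\R^p_+ \times \R^n$ and satisfies $\bar f_i(\y^0) \le 0$ for $i = 1, 2$ together with $\bar f_3(\y^0) = 0$. Plugging $\y^0$ into the separating inequality with $\lambda_0 = 0$ produces $\lambda_1 \bar f_1(\y^0) + \lambda_2 \bar f_2(\y^0) + \lambda_3 \bar f_3(\y^0) \ge \gamma > \lambda_0 \alpha = 0$, contradicting the non-positivity of the left-hand side. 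Hence only $\lambda_0 > 0$ can occur, and letting $\alpha \uparrow z^*$ yields $z_{\rm semi}^* \ge z^*$, so that $z_{\rm COP}^* = z^*$ as claimed.
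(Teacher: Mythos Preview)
Your proof is correct and follows essentially the same route as the paper: reduce to $z_{\rm semi}^*\ge z^*$ via the chain~\reff{eq:chain}, show for any $\alpha<z^*$ that $(\alpha,0,0,0)\T\notin\Omega$, strictly separate, use $\R^4_+$ in the recession cone to get $\lll\ge\o$, use feasibility of (P) to force $\lambda_0>0$, normalize, and pass to the limit. If anything, you spell out the exclusion of $(\alpha,0,0,0)\T$ from $\Omega$ and the argument for $\lambda_0>0$ more explicitly than the paper does.
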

\begin{proof} Let $z_{\rm semi}^*$ denote the optimal value of the semi-Lagrangian dual \eqref{defzsemi}. We first observe that $z_{\rm COP}^*=z_{\rm semi}^*$. To see this, for any $\mu\in \R$ and any quadratic function $q$ defined on $\mathbb{R}^{p+n}$, it can be directly verified
that
the following two conditions are equivalent:
\begin{description}
\item[(a)] $q(\y)\ge \mu$ for all $\y\in \R^p_+\times \R^n$;
\item[(b)] the $(n+p+1)\times (n+p+1)$-matrix $\Mb(q-\mu)=\Mb(q)-\mu\Jb $  is $\Upsilon$-copositive.
\end{description}
This equivalence implies the identity
\beq{keycor}
\inf\lk q(\y) : \y\in \R^p_+\times \R^n\rk = \sup\lk \mu\in \R : \Mb(q)-\mu\Jb \mbox{ is }\Upsilon\mbox{-copositive}\rk\, .
\eeq
Note that above equality holds, by the usual convention that $\sup\lm = -\infty$, also if $q$ is unbounded from below on $\R^p_+\times\R^n$.
Applying \eqref{keycor} with $q=L (\cdot;\u,\o)$, we see that
\[ \Theta_{\rm semi} (\u ) = \sup\lk \mu : \mu\in \R \, , \; \Mb(L (\cdot;\u,\o ))-\mu\Jb \mbox{ is }\Upsilon\mbox{-copositive}\rk \]
Then it follows from  the definitions of semi-Lagrangian dual and copositive relaxation that
\begin{eqnarray}\label{eq:00p}
z_{\rm semi}^*&=&\sup\lk \Theta_{\rm semi}(\u): \u\in \R^{3}_+\rk \nonumber \\
&=&  \sup\lk \mu : (\mu,u)\in \R \times \R^3_+ \, , \; \Mb(L (\cdot;\u,\o ))-\mu\Jb \mbox{ is }\Upsilon\mbox{-copositive}\rk \nonumber \\
& =& z_{\rm COP}^*.
\end{eqnarray}

As $z^* \ge z_{\rm semi}^*$, we see that $z^* \ge z_{\rm COP}^*$ always holds.  So, we can assume without loss of generality that $z^*>-\infty$. As the feasible set of (P) is nonempty, we have $z^*<+\infty$,
and hence $z^* \in \mathbb{R}$.
Let $\epsilon>0$. Thus $[z^*-\epsilon,0,0,0]\T \notin \Omega$. By the strict separation theorem, there exists $(\mu_0,\mu_1,\mu_2,\mu_3) \neq (0,0,0,0)$
such that
\[
\sum_{i=0}^3 \mu_i a_i >  \mu_0(z^*-\epsilon) \quad\mbox{ for all } \a\in \Omega\, .
\]
As $\Omega+\mathbb{R}^4_+ \subseteq \Omega$, we get $\mu_i \ge 0$ for all $i\irg 03$. Moreover, by the feasibility, we see that $\mu_0>0$. Thus, by dividing
$\mu_0$ on both sides, we see that for all $\y=(\s,\x) \in  \mathbb{R}^p_+ \times \mathbb{R}^n$
\[
 \bar{f}_0(\y)+ \sum_{i=1}^3 \mu_i \bar{f}_i(\y) >z^*-\epsilon,
\]
where $\lambda_i=\mu_i /\mu_0$, $i=1,2,3$. This implies that
\[
z^*-\epsilon\le \inf_{\y \in \mathbb{R}^p_+ \times \mathbb{R}^n}\{\bar{f}_0(\y)+ \sum_{i=1}^3 \mu_i \bar{f}_i(\y)\} \le z_{\rm semi}^*=z_{\rm COP}^*,
\]
where the second inequality follows from the definition of semi-Lagrangian dual \eqref{defzsemi}.
By letting $\epsilon \searrow 0$, we have $z^* \le z_{\rm COP}^*$. As the reverse inequality always holds, the conclusion follows.
\end{proof}

 Before we provide simple sufficient conditions ensuring this
geometrical condition, we will  illustrate it using our previous example.
\begin{example}
Consider the same example as in Example \ref{ex:5.1}. We observe that, in this case $\Ab,\Qb_1$ are zero matrices and
 $\q_1,\a$ are zero vectors, and so, the set $\Omega$ becomes
$$
\Omega:=\{ \left[\bea{c}\frac{x_1^2}2+2x_1x_2+ x_2^2 \\-1\\-1\\(x_1-s_1)^2+(x_2-s_2)^2\ea \right]: (\s,\x) \in  \mathbb{R}^2_+ \times 	\mathbb{R}^2 \} +\mathbb{R}^{4}_+\, .
$$
Then
\[
\Omega=\{[z_1,z_2,z_3,z_4]\T: [z_1,z_4]\T \in \Omega_1, z_2 \ge -1, z_3 \ge -1\},
\]
where
\[
\Omega_1= \{\left[\bea{c}\frac{x_1^2}2+2x_1x_2+ x_2^2 \\(x_1-s_1)^2+(x_2-s_2)^2\ea \right] :(\s,\x) \in  \mathbb{R}^2_+ \times 	\mathbb{R}^2 \}+\mathbb{R}^{2}_+ \, .
\]
Now we provide an analytic expression for $\Omega_1$. Note that, if $x_1=0$, then $[\frac{x_1^2}2+2x_1x_2+ x_2^2,(x_1-s_1)^2+(x_2-s_2)^2 ]\T \in \mathbb{R}_+^2$ and $\mathbb{R}^2_+ \subseteq \Omega_1$ (take $x_2=s_2\ge 0$ and $s_1\ge 0=x_1$ to get an arbitrary point $(x_2^2,s_1^2)\T\in \R^2_+$). Thus we only
need to consider the case where $x_1 \neq 0$. Then
 \begin{eqnarray*}
 \Omega_1 &=& \{\left[\bea{c}\frac{x_1^2}2+2x_1x_2+ x_2^2\\(x_1-s_1)^2+(x_2-s_2)^2\ea \right]:\s \in  \mathbb{R}^2_+, \alpha \in \mathbb{R}, \,  \x=\left[\bea{c} t \\ \alpha t\ea \right] \in \mathbb{R}^2\} +\mathbb{R}^{2}_+ \\
 & = &\{ \left[\bea{c}(\frac{1}{2}+2\alpha+ \alpha^2)t^2 \\\min\{t,0\}^2+\min\{\alpha t,0\}^2\ea \right] :\left[\bea{c} t \\ \alpha\ea \right] \in \mathbb{R}^2 \}+\mathbb{R}^{2}_+ \, ,
 \end{eqnarray*}
 where the last equality follows by noting that $\min_{s \ge 0}(x-s)^2=\min\{x,0\}^2$.
Direct verification now shows that
\[
\Omega_1=\{[a_1,a_2]\T: a_2 \ge -a_1 \ge 0\} \cup \mathbb{R}^2_+\, ,
\]
which is closed and convex. Therefore, $\Omega$ is also closed and convex.
\end{example}

 Next, we provide some verifiable sufficient conditions guaranteeing convexity as well as closedness of $\Omega$. To do this, recall
 that an $n\times n$ matrix $\Mb$ is called a $Z$-matrix if its off-diagonal elements $M_{ij}$ with $1 \le i,j \le n$ and $i \neq j$, are all non-positive. We also need the following joint-range convexity for $Z$-matrices.

 \begin{lemma}
 Let $\Mb_i$, $i\irg 1q$, be symmetric $Z$-matrices of order $n$. Then
 \[
 \{(\x\T \Mb_1 \x , \ldots,\x\T \Mb_q \x): \x \in \mathbb{R}^n\}+ \mathbb{R}^q_+
 \]
is a convex cone.
 \end{lemma}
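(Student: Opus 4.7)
The plan is to first check the cone property, which is immediate since replacing $\x$ by $\sqrt{t}\,\x$ scales each $\x\T\Mb_i\x$ by $t\ge 0$, and the $\R^q_+$ summand is already a cone. The substantive content is convexity, which I would establish in two steps.

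First, I would reduce to the case $\x\in\R^n_+$. If $\Mb$ is a symmetric $Z$-matrix and $|\x|$ denotes the componentwise absolute value, then $M_{jk}\le 0$ for $j\neq k$ together with $x_jx_k\le |x_j||x_k|$ gives
\[
\x\T\Mb\x = \sum_j M_{jj}x_j^2 + \sum_{j\neq k}M_{jk}x_jx_k \;\ge\; \sum_j M_{jj}|x_j|^2 + \sum_{j\neq k}M_{jk}|x_j||x_k| = |\x|\T\Mb|\x|.
\]
Applying this simultaneously with each $\Mb=\Mb_i$ shows that the point produced by $\x$ dominates coordinatewise the point produced by $|\x|\in\R^n_+$. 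Hence the set in the statement coincides with the analogous set obtained by restricting $\x$ to $\R^n_+$.

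Next, I would take $\x,\y\in\R^n_+$ and $\lambda\in[0,1]$, and construct $\t\in\R^n_+$ componentwise via
\[
t_j := \sqrt{\lambda x_j^2+(1-\lambda)y_j^2}\, ,\qquad j\irg 1n.
\]
For the diagonal contribution one has the \emph{exact} identity $\sum_j M_{i,jj}t_j^2 = \lambda\sum_j M_{i,jj}x_j^2 + (1-\lambda)\sum_j M_{i,jj}y_j^2$, while the Cauchy--Schwarz inequality gives
\[
t_jt_k = \sqrt{\bigl(\lambda x_j^2+(1-\lambda)y_j^2\bigr)\bigl(\lambda x_k^2+(1-\lambda)y_k^2\bigr)} \;\ge\; \lambda x_jx_k + (1-\lambda)y_jy_k
\]
for every pair $(j,k)$, using $\x,\y\in\R^n_+$ to ensure the right-hand side is nonnegative. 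Because $M_{i,jk}\le 0$ for $j\neq k$, this lower bound on $t_jt_k$ flips to an upper bound on the off-diagonal contribution, and summing over all $(j,k)$ yields
\[
\t\T\Mb_i\t \;\le\; \lambda\,\x\T\Mb_i\x + (1-\lambda)\,\y\T\Mb_i\y \qquad \text{for every }i\irg 1q.
\]
Thus $\lambda(\x\T\Mb_i\x)_i + (1-\lambda)(\y\T\Mb_i\y)_i$ equals $(\t\T\Mb_i\t)_i$ plus a vector in $\R^q_+$, which lies in the set. Convexity follows, and combined with the conic property proved at the outset, the set is a convex cone.

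The main obstacle is choosing the correct ``averaging'' of $\x$ and $\y$: the naive candidate $\sqrt{\lambda}\,\x+\sqrt{1-\lambda}\,\y$ fails because the cross term $2\sqrt{\lambda(1-\lambda)}\,\x\T\Mb_i\y$ carries no useful sign — the diagonal part of $\x\T\Mb_i\y$ can be arbitrary. The componentwise square-root $t_j=\sqrt{\lambda x_j^2+(1-\lambda)y_j^2}$ is the right choice because it renders the diagonal exact and lets Cauchy--Schwarz interact with the nonpositive off-diagonal entries in precisely the direction needed.
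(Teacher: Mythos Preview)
Your proof is correct. The paper does not actually supply an argument for this lemma---it merely points to \cite[Theorem~5.1]{Jeya09a}---and the construction you give (reduce to $\x\in\R^n_+$ via $|\x|$, then average coordinatewise by $t_j=\sqrt{\lambda x_j^2+(1-\lambda)y_j^2}$ and use Cauchy--Schwarz together with the nonpositive off-diagonals) is precisely the standard approach behind that cited result.
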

\begin{proof}
The proof is similar to \cite[Theorem 5.1]{Jeya09a}.
\end{proof}

 \begin{prop}
 Suppose that $\Qb_0,\Qb_1$, $\Ab\T\Ab$ are all $Z$-matrices, $\Bb=-\Ib_n$ and $\q_0,\q_1,\a,\b$ are zero vectors. Then $\Omega$ is convex.
 \end{prop}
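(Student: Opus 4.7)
\medskip\noindent
\textbf{Proof plan.} Under the given hypotheses the quadratic data simplify to $\bar f_0(\y)=\x\T\Qb_0\x$, $\bar f_1(\y)=\x\T\Qb_1\x-1$, $\bar f_2(\y)=\x\T\Qb_2\x-1$ with $\Qb_2:=\Ab\T\Ab$, and $\bar f_3(\y)=\|\s-\x\|^2$, where $\y=(\s,\x)\in\R^n_+\times\R^n$ (note $p=n$, since $\Bb=-\Ib_n$). Because $\Omega$ is convex iff any of its translates is, my plan is to absorb the two additive constants $-1$ and instead prove convexity of
\begin{equation*}
\widetilde\Omega \;:=\; \{[\x\T\Qb_0\x,\,\x\T\Qb_1\x,\,\x\T\Qb_2\x,\,\|\s-\x\|^2]\T \,:\, \s\in\R^n_+,\,\x\in\R^n\}+\R^4_+,
\end{equation*}
and then to funnel $\widetilde\Omega$ into the joint-range convexity lemma just established.

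The decisive feature is that $\s$ enters only in the last coordinate, where $\|\s-\x\|^2\ge 0$ and, for $\x\in\R^n_+$, the choice $\s=\x$ makes this quantity vanish. Writing $|\x|$ for the componentwise absolute value of $\x$, the $Z$-matrix property of each $\Qb_i$ forces $\x\T\Qb_i\x\ge|\x|\T\Qb_i|\x|$ for $i=0,1,2$, since the off-diagonal entries of $\Qb_i$ are $\le 0$ while $x_jx_k\ge-|x_j||x_k|$. Combining these two observations, I expect to verify the identity
\begin{equation*}
\widetilde\Omega \;=\; \Pi\times\R_+,\qquad \Pi \;:=\; \{[\x\T\Qb_0\x,\,\x\T\Qb_1\x,\,\x\T\Qb_2\x]\T \,:\, \x\in\R^n_+\}+\R^3_+:
\end{equation*}
for $\Pi\times\R_+\subseteq\widetilde\Omega$ one selects $\s=\x\in\R^n_+$ and adds any nonnegative slack in the fourth coordinate; conversely, for $\widetilde\Omega\subseteq\Pi\times\R_+$ one replaces $\x\in\R^n$ by $|\x|\in\R^n_+$ (the first three coordinates only decrease by the $Z$-matrix estimate, and the last coordinate is already nonnegative and absorbed by $\R_+$).

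The same absolute-value argument also identifies $\Pi$ with $\{[\x\T\Qb_0\x,\,\x\T\Qb_1\x,\,\x\T\Qb_2\x]\T : \x\in\R^n\}+\R^3_+$, which is a convex cone by the preceding lemma applied to the three $Z$-matrices $\Qb_0,\Qb_1,\Qb_2$. Thus $\Pi$ is convex, hence so are $\widetilde\Omega=\Pi\times\R_+$ and $\Omega$.

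I expect the main obstacle to be the joint elimination of $\s\in\R^n_+$ and the simultaneous passage to $\x\in\R^n_+$: both steps succeed precisely because the $Z$-matrix hypothesis makes the substitution $\x\mapsto|\x|$ cost-free in each of the three quadratic coordinates, which in turn allows the outer $\R^4_+$ to swallow the remaining slack in the $\|\s-\x\|^2$-coordinate. Without the $Z$-matrix assumption the absolute-value trick can increase one of $\x\T\Qb_i\x$, breaking the componentwise domination on which the whole reduction rests.
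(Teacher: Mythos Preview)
Your proof is correct. The ingredients are the same as the paper's---the translation to kill the additive constants, the $Z$-matrix inequality $\x\T\Qb_i\x\ge|\x|\T\Qb_i|\x|$, and the joint-range convexity lemma---but you assemble them in a different order. The paper lifts all four quadratic forms to $\R^{2n}$, observes that each of the four associated $2n\times 2n$ matrices (including the block matrix $\left[\begin{smallmatrix}\Ib_n&-\Ib_n\\-\Ib_n&\Ib_n\end{smallmatrix}\right]$ for $\|\s-\x\|^2$) is a $Z$-matrix, applies the lemma in $\R^{2n}$ with $q=4$, and only afterwards uses the $|\cdot|$-trick to restore the sign constraint $\s\in\R^n_+$. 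You instead run the $|\cdot|$-trick first, which lets you peel off the fourth coordinate entirely (since $\|\s-\x\|^2\ge 0$ with equality at $\s=\x\in\R^n_+$), factor $\widetilde\Omega=\Pi\times\R_+$, and then apply the lemma in the smaller space $\R^n$ with only $q=3$ matrices. Your route is a bit more economical and makes the role of the slack variable $\s$ very transparent; the paper's route treats all four coordinates on an equal footing and highlights that the $\|\s-\x\|^2$-block is itself a $Z$-matrix, which is the structural fact behind your decoupling step.
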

 \begin{proof}
Let  $\bar{h}_0(\y)=\x\T\Qb_0\x$, $\bar{h}_1(\y) = \x\T\Qb_1\x$,  $\bar{h}_2(\y) = {\norm {\Ab\x}}^2$  and
 $\bar{h}_3(\y)=\|\x-\s\|^2$ with $\y=(\s,\x)$, so $p=n$ here.
 We first note that $ \Omega=(0,-1,-1,0)+\bar \Omega$
 where
 \[
 \bar \Omega=\{(\bar{h}_0(\y),\bar{h}_1(\y),\bar{h}_2(\y),\bar{h}_3(\y)): \y \in  \mathbb{R}^p_+ \times 	\mathbb{R}^n \}+\mathbb{R}_+^4.
 \]
 To see the convexity of $\Omega$, it suffices to show that $\bar \Omega$
 is convex.
 To verify this, take  $(u_{0},u_{1},u_{2},u_{3}) \in \bar \Omega$ and $(v_{0},v_{1},v_{2},v_{3}) \in \bar \Omega$, and let $\lambda \in [0,1]$.
 Then there exist $(\hat{\s},\hat{\x}) \in \mathbb{R}^n_+ \times \mathbb{R}^n $ and $(\tilde{\s},\tilde{\x}) \in \mathbb{R}^n_+ \times \mathbb{R}^n $ such that
 \[
 \bar h_i(\hat{\s},\hat{\x}) \le u_i \mbox{ and } \bar h_i(\tilde{\s},\tilde{\x}) \le v_i\, , \; i\irg 13\, .
 \]
 In particular, $u_3\ge 0$ and $v_3 \ge 0$.
 We now verify that
 $$\lambda (u_{0},u_{1},u_{2},u_{3})+(1-\lambda) (v_{0},v_{1},v_{2},v_{3}) \in \bar \Omega.$$
Note that $\bar h_i(\y) = \y\T \left[ \Ob\oplus  \Qb_i \right] \y$ for $i\in \lk0,1\rk$ and $\bar h_2(\y) = \y\T \left[ \Ob\oplus  \Ab\T\Ab  \right] \y$, cf.~\eqref{defjo},
while
$$\bar h_3(\y) = \y\T \left[\begin{array}{cc}
                       \Ib_n &  -\Ib_n \\
                       -\Ib_n & \Ib_n
                       \end{array}
 \right] \y \, $$
 so that the associated matrices
\[
\left[\begin{array}{cc}
                       \Ob & \Ob\T \\
                       \Ob & \Qb_0
                       \end{array}
 \right],  \, \left[\begin{array}{cc}
                       \Ob & \Ob\T \\
                       \Ob & \Qb_1
                       \end{array}
 \right], \,  \left[\begin{array}{cc}
                       \Ob & \Ob\T \\
                       \Ob & \Ab\T\Ab
                       \end{array}
 \right], \left[\begin{array}{cc}
                       \Ib_n &  -\Ib_n \\
                       -\Ib_n & \Ib_n
                       \end{array}
 \right]
\]
are all $Z$-matrices. We see that
  \[
\{(\bar{h}_0(\y),\bar{h}_1(\y),\bar{h}_2(\y),\bar{h}_3(\y)): \y \in  \mathbb{R}^p \times 	\mathbb{R}^n \}+\mathbb{R}_+^4 \]
is convex.  So there exists $(\r,\z) \in \mathbb{R}^p \times \mathbb{R}^n$ such that
 \[
 \bar h_i(\r,\z) \le \lambda u_i+(1-\lambda)v_i \, ,\;  i\irg 03\, .
 \]
 Denote $\z=(z_1,\ldots,z_n)$ and let $|\z|=(|z_1|,\ldots,|z_n|)$. The $Z$-matrices assumptions ensure
 \[
 \bar{h}_i(|\z|,|\z|)=|\z|\T \Qb_i |\z|  \le  \z\T\Qb_i\z \le \lambda u_i +(1-\lambda) v_i\, , \; i\in \lk 0,1\rk\, ,
 \]
 \[
 \bar{h}_2(|\z|,|\z|)= |\z|\T (\Ab\T\Ab) |\z|  \le \z\T (\Ab\T\Ab)\z  \le \lambda u_2 +(1-\lambda) v_2
 \]
 and
 \[
 \bar{h}_3(|\z|,|\z|)=0 \le \lambda u_3 +(1-\lambda) v_3.
 \]
 Therefore, $\lambda (u_{0},u_{1},u_{2},u_{3})+(1-\lambda) (v_{0},v_{1},v_{2},v_{3}) \in \bar \Omega$, and so the conclusion follows.
 \end{proof}

 \begin{prop}
 Suppose that there exist $\tau_i\ge 0$, $i\irg 02$,  such that
 \[
 \tau_0 \Qb_0+ \tau_1\Qb_1 + \tau_2 \Ab\T\Ab \succ 0.
 \]
 Then $\Omega$ is closed.
 \end{prop}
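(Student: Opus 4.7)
The plan is to show closedness by a direct sequential argument, using the positive-definite combination to get coercivity in $\x$, and then $\bar f_3$ to transfer the bound to $\s$. Let $\Pb := \tau_0 \Qb_0 + \tau_1 \Qb_1 + \tau_2 \Ab\T\Ab$, which is positive definite by hypothesis. Take any sequence $\z^k \in \Omega$ with $\z^k \to \z \in \R^4$. By definition, for each $k$ there exist $\y^k = (\s^k,\x^k) \in \R^p_+ \times \R^n$ and $\r^k \in \R^4_+$ with $\z^k = [\bar f_0(\y^k), \bar f_1(\y^k), \bar f_2(\y^k), \bar f_3(\y^k)]\T + \r^k$; in particular $\bar f_i(\y^k) \le z_i^k$ for each $i \in [0\!:\!3]$ and hence $\limsup_{k\to\infty} \bar f_i(\y^k) \le z_i$.

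Next I would combine these inequalities with the weights $\tau_i$ to obtain
\[
\limsup_{k\to\infty} \Bigl[ (\x^k)\T \Pb \x^k + 2(\tau_0\q_0 + \tau_1 \q_1 - \tau_2 \Ab\T\a)\T \x^k \Bigr] \le \tau_0 z_0 + \tau_1 z_1 + \tau_2 z_2 + \tau_1 + \tau_2 - \tau_2 \|\a\|^2,
\]
which is a finite number. Since $\Pb \succ \Ob$ gives a coercive bound of the form $\x\T \Pb \x \ge \lambda_{\min}(\Pb) \|\x\|^2$ on the quadratic part, the bracketed expression tends to $+\infty$ whenever $\|\x^k\| \to \infty$. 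Hence $(\x^k)$ is bounded. Then $\bar f_3(\y^k) = \|\Bb\x^k + \s^k - \b\|^2$ is also asymptotically bounded (by $z_3$), and boundedness of $(\x^k)$ forces $(\s^k)$ to be bounded as well.

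Passing to a subsequence, I may assume $\y^k \to \y^* = (\s^*, \x^*)$ with $\s^* \ge \o$, so $\y^* \in \R^p_+ \times \R^n$. By continuity of each $\bar f_i$, the vector $\a^*:=[\bar f_0(\y^*),\bar f_1(\y^*),\bar f_2(\y^*),\bar f_3(\y^*)]\T$ is the limit of the quadratic part of $\z^k$, hence $\r^k = \z^k - \a^k \to \z - \a^*$, and the limit $\r^* := \z - \a^*$ lies in the closed set $\R^4_+$. Writing $\z = \a^* + \r^*$ exhibits $\z \in \Omega$, establishing closedness.

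The only real obstacle is the coercivity argument in the first step: a priori $\Pb$ being positive definite controls only the quadratic part of the combination $\tau_0 \bar f_0 + \tau_1 \bar f_1 + \tau_2 \bar f_2$, so I rely on the standard fact that a quadratic function with positive definite Hessian is coercive regardless of its linear term. Everything else is continuity and closedness of $\R^p_+$ and $\R^4_+$.
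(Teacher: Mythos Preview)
Your proof is correct and follows essentially the same route as the paper's: both arguments take a convergent sequence in $\Omega$, use the positive-definite combination $\sum_{i=0}^2 \tau_i f_i$ to obtain coercivity in $\x$ and hence boundedness of $(\x^k)$, then use the bound on $\bar f_3(\y^k)=\|\Bb\x^k+\s^k-\b\|^2$ to transfer boundedness to $(\s^k)$, and finally pass to a convergent subsequence and invoke continuity. Your version is slightly more explicit (you write out the linear term and track the residual $\r^k$), but the ideas are identical.
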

 \begin{proof}
 Let $\r^{(k)} \in \Omega$ such that $\r^{(k)}\rightarrow\r\in \R^4$.
 Then there exists $\y_k=(\s_k,\x_k)\in \R^p_+\times \R^n$ such that
 \[
 \bar{f}_i(\y_k) \le  r_{i}^{(k)} \quad \mbox{for all } i\irg 03 \mbox{ and all }k\, .
 \]
 We first see that $\{\x_k\}$ is bounded.  To see this, note that
 $$ \sum_{i=0}^2 \tau_i f_i(\x_k) = \sum_{i=0}^2 \tau_i \bar f_i(\x_k) \le \sum_{i=0}^2 \tau _i r_i^{(k)} \rightarrow \sum_{i=0}^2 \tau _i r_i\, .$$
  Since $\nabla^2 \big(\sum_{i=0}^2 \tau_i f_i\big)(\x) \equiv \tau_0 \Qb_0+ \tau_1 \Qb_1+\tau_2 \Ab\T\Ab  \succ 0$, this implies that
 $\{\x_k\}$ must be bounded. Taking into account that
 $$\bar f_3(\s_k,\x_k)=\|\Bb\x_k+\s_k-\b\|^2 \le r_{3}^{(k)} \rightarrow r_3\, ,$$
 it follows that also $\{\s_k\}$ is a bounded sequence.
 By passing to subsequences, we may assume that $\y_k=(\s_k,\x_k) \rightarrow (\s,\x)=:\y\in \R^p_+\times \R^n$. Passing to the limit, we see that $\bar{f}_i(\y) \le  r_{i}, \ i\irg 03$ and so  $\r \in \Omega$. Thus $\Omega$ is closed.
 \end{proof}

\vspace{-0.2cm}
\subsection*{Sufficient global optimality conditions}
Now, we obtain the following sufficient second-order global optimality condition, which also implies that the copositive relaxation is tight, generalizing a recent result~\cite[Section~6.3]{Bomz13b} for
CDT problems:

\begin{thm}\label{tight}
If at a generalized KKT pair  $(\bar \x; \bar \u,\bar\v)\in (F\cap P)\times \R^3_+\times\R^p$ of problem~\reff{qqpp}, we have
\beq{defbars}\bar\Sb:= \Mb(L  (\cdot;\bar \u,\o ))-f_0(\bar \x)\Jb\in \CC^\star_\Upsilon\, ,\eeq
then $\bar\x$ is a globally optimal solution  to~\reff{qqpp} and $z^*=z_{\rm COP}^*$.
\end{thm}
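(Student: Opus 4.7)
The plan is a short pincer argument on $z^*$ and $z^*_{\rm COP}$. First, observe that the hypothesis $\bar\Sb = \Mb(L(\cdot;\bar\u,\o)) - f_0(\bar\x)\Jb \in \CC^\star_\Upsilon$, combined with $\bar\u \in \R^3_+$, says precisely that the pair $(\mu,\u) = (f_0(\bar\x),\bar\u)$ is feasible for the supremum defining (COP), and therefore
$$z^*_{\rm COP} \;\ge\; f_0(\bar\x).$$

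Next, $\bar\x \in F\cap P$ is feasible for~\reff{qqpp}, so $z^* \le f_0(\bar\x)$; meanwhile, the always-valid chain~\reff{eq:chain} provides $z^*_{\rm COP} \le z^*$. Stringing these three inequalities together,
$$z^*_{\rm COP} \;\ge\; f_0(\bar\x) \;\ge\; z^* \;\ge\; z^*_{\rm COP},$$
and the extremes coincide, forcing equality throughout. In particular, $f_0(\bar\x) = z^* = z^*_{\rm COP}$, which simultaneously yields global optimality of $\bar\x$ and exactness of the copositive relaxation.

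Conceptually this is a certificate argument. Via the equivalence (a)$\Leftrightarrow$(b) between copositivity of $\Mb(q) - \mu\Jb$ and the pointwise bound $q(\y) \ge \mu$ on $\R^p_+\times\R^n$ (the same equivalence exploited in the proof of the preceding exactness theorem), the hypothesis $\bar\Sb\in\CC^\star_\Upsilon$ translates into the uniform lower bound $L(\y;\bar\u,\o) \ge f_0(\bar\x)$ for every $\y = (\s,\x) \in \R^p_+\times\R^n$. The generalized KKT structure is not invoked directly in the three displayed lines above; its role is to guarantee that this uniform lower bound is \emph{attained} at $\bar\y = (\b - \Bb\bar\x,\bar\x)$ with value $f_0(\bar\x)$, by virtue of complementarity $\bar u_i\bar f_i(\bar\y) = 0$ together with $\bar f_3(\bar\y) = 0$, which is what makes the copositivity hypothesis a natural and non-vacuously unsatisfiable condition to impose. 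There is no serious analytical obstacle in the proof itself; the entire content lies in interpreting the copositivity correctly and then invoking~\reff{eq:chain}.
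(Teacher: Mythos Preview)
Your argument is correct and the pincer is exactly the one the paper uses: feasibility of $(f_0(\bar\x),\bar\u)$ in~\reff{copd} gives $z^*_{\rm COP}\ge f_0(\bar\x)$, feasibility of $\bar\x$ for~\reff{qqpp} gives $z^*\le f_0(\bar\x)$, and~\reff{eq:chain} closes the loop. The paper's write-up additionally introduces the completely-positive conic dual~\reff{copp} and, using complementarity to get $L(\bar\y;\bar\u,\o)=f_0(\bar\x)$, exhibits $\bar\Xb=\bar\z\bar\z\T$ with $\bar\z\T=[1,\bar\y\T]$ as an optimal primal matrix satisfying $\du{\bar\Xb}{\bar\Sb}=0$; this also yields $z^*_{\rm COP}=z^*_{\rm CP}$, a byproduct not claimed in the theorem statement. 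Your version bypasses that detour by invoking~\reff{eq:chain} directly, which is legitimate since~\reff{eq:chain} has already been established, and your closing remark correctly identifies the KKT complementarity as what makes the copositivity hypothesis attainable rather than as a step the inequalities themselves require.
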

\bep We first note that the conic dual of problem~\reff{copd} is
\beq{copp}
z_{\rm CP}^* = \inf\lk \du {\Mb_0} \Xb : \du{\Mb_i}\Xb \le 0, \, i\irg 1{3}\, , \, \du {\Jb} \Xb = 1 \, , \, \Xb\in \CC_\Upsilon\rk\, ,\eeq
with $\Mb_i = \Mb(\bar f_i)$ and $\CC_\Upsilon = \conv \lk \x\x\T : \x\in  \R^{p+1}_+\times \R^n \rk $.
From standard conical lifting and weak duality arguments it follows
$$z_{\rm LD}^*\le z_{\rm COP}^* \le z_{\rm CP}^*
\le z^*\, .$$
Let $\bar\s= \b-\Bb\bar\x$ and $\bar\y=(\bar \s, \bar\x)$. The complementarity conditions imply $\bar\v\T\bar\s = 0$ and $\sum\limits_{i=1}^{3}u_i \bar f_i(\bar \y)= 0$, so that both the standard $L (\bar\y;\bar\u,\bar \v) = f_0(\bar\x)$ and as well $L (\bar\y;\bar\u,\o) = f_0(\bar\x)$, which will be used now. Indeed, put
$\bar\z\T =[1,\bar\y\T]$ and $\bar\Xb = \bar\z\bar\z\T\in \CC_\Upsilon$. Then from the definition of $\bar \Sb$ we get
$$\du {\bar\Xb}{\bar\Sb} = \qf{\bar\z}{\bar\Sb} = L (\bar \y; \bar\u,\o) - f_0(\bar \x) = 0\, ,$$
so that $( \bar\Xb,\bar\Sb)$ form an optimal primal-dual pair for the copositive problem~\reff{copp} and~\reff{copd} with zero duality gap. We conclude, by feasibility of $\bar\x$ and definition of $z^*$, and because of~\reff{copd} with $\mu =f_0(\bar\x)$, cf.~\reff{defbars},
$$z^* \le  f_0(\bar\x)  \le z_{\rm COP}^* \le z_{\rm CP}^* \le  z^*$$
yielding tightness of the copositive relaxation, zero duality gap for the copositive-cp conic optimization problems, and optimality of $\bar\x$.\ep

While checking copositivity is NP-hard, the slack matrix $\bar \Sb$ may lie in a slightly smaller but tractable approximation cone, and then global optimality is guaranteed even in cases where $\bar \Sb$ is indefinite. The difference can also be expressed in properties of the Hessian $\Hb_{\bar \u}$ of the Lagrangian (recall that this is the same irrespective of our decision whether to relax also the linear constraints or not): indeed, a similar condition on the slack matrix yielding tightness of the classical Lagrangian bound (i.e. $z_{\rm LD}^*=z^*$) or the equivalent SDP relaxation~\cite[Section~5.1]{Bomz13c} implies that its lower right principal submatrix $\Hb_{\bar\u}$ has to be psd, and we know this is too strong in some cases~\cite{Yuan90}.

By contrast, the condition $\bar\Sb\in\CC^\star_\Upsilon$ {(giving tightness $z_{\rm COP}^*=z^*$)}, by the same argument using Lemma~\ref{schur} and~\reff{explsl}, only yields positive semidefiniteness of $\Hb_{\bar\u}+ \bar\u_{3}\Bb\Bb\T$. Of course, this happens with higher frequency than positive-definiteness of the Hessian, and the discrepancy is not negligible,
see~\cite[Section~5]{Bomz13c} for an example.

We note that Theorem \ref{tight} can be used to construct a class of problems where copositive relaxation is always tight while the usual Lagrangian dual produces a trivial bound with
infinite duality gaps. To see this, we shall need the following auxiliary result.

\begin{lemma}
Let $\Mb$ be strictly $\R^n_+$-copositive; then there exists a constant $\sigma>0$ such that
$$\qf\x\Mb + \sigma \norm{\x-\s}^2 >0 \mbox{ for all } (\s,\x)\in \big(\R^n_+\times\R^n\big)\setminus\lk \o\rk\, .$$
\end{lemma}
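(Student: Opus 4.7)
The plan is to eliminate the slack variable $\s$ first. For any fixed $\x \in \R^n$, the minimum of $\|\x - \s\|^2$ over $\s \in \R^n_+$ is attained at $\s = \x^+:=\max(\x,\o)$ (componentwise) and equals $\|\x^-\|^2$, where $\x^- := \max(-\x,\o)$. Separately, the case $\x = \o$ forces $\s \neq \o$, so $\x\T\Mb\x + \sigma\|\x-\s\|^2 = \sigma\|\s\|^2 > 0$ trivially. Hence it suffices to produce one $\sigma > 0$ such that
\[
g_\sigma(\x) := \x\T\Mb\x + \sigma\|\x^-\|^2 > 0 \quad\mbox{for every } \x \in \R^n \setminus \{\o\}.
\]

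Since $g_\sigma$ is positively $2$-homogeneous in $\x$, I would restrict to the unit sphere $S = \{\x \in \R^n : \|\x\| = 1\}$ and argue by contradiction. If no such $\sigma$ existed, then for every $k \in \N$ one could pick $\x_k \in S$ with $\x_k\T\Mb\x_k + k\|\x_k^-\|^2 \le 0$. Compactness of $S$ then delivers a subsequential limit $\x^* \in S$, and since $\x \mapsto \x^-$ is continuous, $\|\x_k^-\|^2 \to \|(\x^*)^-\|^2$.

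Two cases close the argument. If $\x^* \in \R^n_+$, then strict $\R^n_+$-copositivity of $\Mb$ yields $(\x^*)\T\Mb\x^* > 0$; continuity of the quadratic form together with nonnegativity of $k\|\x_k^-\|^2$ then forces $\x_k\T\Mb\x_k + k\|\x_k^-\|^2 > 0$ for large $k$, contradicting $\le 0$. If instead some coordinate of $\x^*$ is negative, so that $\|(\x^*)^-\|^2 > 0$, then $k\|\x_k^-\|^2 \to +\infty$ while $\x_k\T\Mb\x_k$ stays bounded on $S$, again a contradiction. The only mildly subtle step is the reduction identifying $\min_{\s \ge \o}\|\x-\s\|^2$ with $\|\x^-\|^2$; the rest is a standard compactness argument fueled by the strict $\R^n_+$-copositivity hypothesis.
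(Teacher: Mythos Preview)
Your proof is correct and follows essentially the same route as the paper's: both eliminate $\s$ via $\min_{\s\ge\o}\|\x-\s\|^2=\|\x^-\|^2$, reduce by homogeneity to the unit sphere, and exploit compactness together with the dichotomy ``limit point in $\R^n_+$'' versus ``limit point outside $\R^n_+$''. The only cosmetic difference is that the paper argues directly and exhibits an explicit constant $\sigma=-2\lambda_{\min}(\Mb)/\eps^2$ (after a uniform-continuity choice of $\eps$), whereas you argue by contradiction via a sequence with $\sigma=k\to\infty$.
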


\bep  We first note that the conclusion trivially holds if $\Mb$ is further assumed to be positive semidefinite. So we may assume without loss of generality that
$\lambda_{\rm min} (\Mb) <0$.
For any $\x\in \R^n$, denote by
$$\x^+:=[\max\{0,x_1\},\ldots,\max\{0,x_n\}]\T \in \R^n_+$$ and by $\x^-:= \x^+-\x\in \R^n_+$ so that $\x = \x^+-\x^-$. Furthermore, we have $\norm{\x-\s}\ge\norm{\x^-}$ for all $\s\in \R^n_+$, as can be seen easily. Therefore we are done if we establish the (non-quadratic) inequality $\qf\x\Mb + \sigma \norm{\x^-}^2 >0$ whenever $\x\in \R^n\setminus\lk \o\rk$. Now, given $\Mb$ is strictly copositive, we choose $\rho :=\min \lk \qf\x\Mb : \x\in \R^n_+ , \norm \x =1\rk>0$. Note that $\x\in \R^n_+$ if and only if $\norm {\x^-}=0$. By continuity and compactness, we infer existence of an $\eps>0$ such that $\qf\x\Mb \ge\frac \rho 2$ whenever $\norm {\x^-}\le\eps$ and $\norm \x = 1$. Now we distinguish two cases:\\
Case 1: $\norm {\x^-}\ge \eps\norm \x>0$. In this case, we have
$$\qf\x\Mb \ge \lambda_{\rm min} (\Mb) {\norm\x}^2 > -\sigma {\norm {\x^-}}^2\, ,$$
where we set $\sigma := -  2\lambda_{\rm min} (\Mb)/\eps^2 >0$;

\noindent Case 2: $\norm {\x^-}\le \eps\norm \x$. In this case,  one has $\qf\x\Mb \ge \frac \rho 2 {\norm\x}^2 > -\sigma \norm{\x^-}^2$ (for any $\sigma>0$).

\noindent So in both cases, we obtain $\qf\x\Mb + \sigma{\norm{\x^-}}^2 >0$ for all $\x\in \R^n\setminus\lk\o\rk$, and the lemma is shown.
\ep
\vspace{-0.2cm}

\medskip

\noindent 
Consider the following non-convex quadratic optimization problem
\begin{eqnarray*}
\mathrm{(EP)} & \min & f_0(\x):= \x\T\Qb_0\x \\
& \mbox{subject to} & \|\x\|^2 \le 1\, , \\
& & \x \in \R^n_+ \, ,
\end{eqnarray*}
where $\Qb_0$ is a strictly $\mathbb{R}^n_+$-copositive and indefinite matrix. This problem can be regarded as a special case of the model problem~{\rm{(P)}} with
 $\Qb_1=\Ib_n$, $\Ab=\Ob$ and $\a=0$. We now see that this class of quadratic optimization admits a tight copositive relaxation and an infinite Lagrangian duality gap.
\begin{prop}\label{remark:3.2} {\bf (Tight copositive relaxation and infinite Lagrangian duality gap for (EP)\big)}
For problem (EP), let $z^*$, $z_{\rm LD}^*$ and $z_{\rm COP}^*$ denote the optimal value of (EP), the Lagrangian relaxation of (EP) and
 copositive relaxation of (EP) respectively. Then  $z_{\rm LD}^*=-\infty$ and $z^*=z_{\rm COP}^*=0$.
\end{prop}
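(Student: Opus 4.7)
The plan is to verify the three conclusions $z^*=0$, $z_{\rm COP}^*=0$, and $z_{\rm LD}^*=-\infty$ separately. The first is immediate: strict $\mathbb{R}^n_+$-copositivity of $\Qb_0$ forces $f_0(\x)=\x\T\Qb_0\x\ge 0$ on $\mathbb{R}^n_+$, with equality exactly at $\x=\o$, which is feasible for (EP); hence $z^*=0$ is attained at the origin.

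To obtain $z_{\rm COP}^*=0$, I would apply Theorem~\ref{tight} at the candidate generalized KKT pair $(\bar\x;\bar\u,\bar\v)=(\o;(0,0,\sigma),\o)$, with $\sigma>0$ supplied by the preceding lemma applied to $\Mb=\Qb_0$. Since $\bar\s=\b-\Bb\bar\x=\o$ and $\bar\y=\o$, the stationarity and complementarity conditions of the generalized KKT definition hold trivially at the origin. The decisive point is $\bar\Sb\in\CC^\star_\Upsilon$. Using $f_0(\bar\x)=0$, this reduces to showing that $\Mb(L(\cdot;\bar\u,\o))$ is $\Upsilon$-copositive, where
\[
L(\y;\bar\u,\o)=\x\T\Qb_0\x+\sigma\|\s-\x\|^2 .
\]
But the preceding lemma asserts exactly that this expression is nonnegative on $\mathbb{R}^n_+\times\mathbb{R}^n$, and the identity~\reff{keycor} (applied with $q=L(\cdot;\bar\u,\o)$ and $\mu=0$) translates that nonnegativity into the desired $\Upsilon$-copositivity of $\bar\Sb$. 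Theorem~\ref{tight} then delivers $z^*=z_{\rm COP}^*=0$.

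For $z_{\rm LD}^*=-\infty$ I would follow the template of Example~\ref{ex:5.1}: for every fixed $(\u,\v)\in\mathbb{R}^3_+\times\mathbb{R}^n_+$, construct a sequence $\y_t=(\s_t,\x_t)\in\mathbb{R}^{2n}$ along which $L(\y_t;\u,\v)\to-\infty$. Indefiniteness of $\Qb_0$ yields $\d\in\mathbb{R}^n$ with $\d\T\Qb_0\d<0$, and strict copositivity forces $\d\notin\mathbb{R}^n_+$. Along the natural ray $\x_t=t\d$, $\s_t=\x_t$, the penalty $u_3\|\s-\x\|^2$ vanishes and $L$ reduces to a quadratic in $t$ with leading coefficient $\d\T(\Qb_0+u_1\Ib_n)\d$, whose negative sign should drive $L\to-\infty$ as $t$ is sent to $\pm\infty$ in the appropriate direction (the linear contribution $-t\v\T\d$ is then dominated). \textbf{The main obstacle} is that for large $u_1$ the ball multiplier can render $\d\T(\Qb_0+u_1\Ib_n)\d\ge 0$, so the single ray above does not handle the full multiplier range uniformly. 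To close that gap I would exploit the fact that $\s$ is unconstrained in $\mathbb{R}^n$ and build a two-parameter direction in $(\s,\x)$ along which the negative contribution of $\x\T\Qb_0\x$ survives while the $u_3$-penalty and the linear form in $\v$ are outgrown; carrying this construction through simultaneously for every admissible $(\u,\v)$ is the principal technical hurdle.
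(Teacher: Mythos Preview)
Your arguments for $z^*=0$ and $z_{\rm COP}^*=0$ are correct and coincide with the paper's almost verbatim: both take $(\bar\x;\bar\u,\bar\v)=(\o;(0,0,\sigma),\o)$ with $\sigma$ supplied by the preceding lemma, verify the generalized KKT conditions at the origin, observe that $L(\y;\bar\u,\o)=\x\T\Qb_0\x+\sigma\|\s-\x\|^2\ge 0$ on $\R^n_+\times\R^n$ forces $\Upsilon$-copositivity of $\bar\Sb$, and then invoke Theorem~\ref{tight}.

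For $z_{\rm LD}^*=-\infty$, however, the obstacle you isolate is not a technical wrinkle to be ``closed'' by a clever two--parameter direction: it is fatal to the assertion itself. Since the unit--ball constraint is part of~(EP), the multiplier $u_1$ is available in~\eqref{defzfulld}, and the admissible choice $u_1=-\lambda_{\min}(\Qb_0)>0$, $u_2=u_3=0$, $\v=\o$ gives $L(\y;\u,\v)=\x\T(\Qb_0+u_1\Ib_n)\x-u_1\ge \lambda_{\min}(\Qb_0)$ on all of $\R^{2n}$, hence $z_{\rm LD}^*\ge\lambda_{\min}(\Qb_0)>-\infty$. In fact one gets equality: whenever $\Theta(\u,\v)>-\infty$, minimizing $L$ first over $\s$ shows that $\Qb_0+u_1\Ib_n\succeq\Ob$, so $u_1\ge-\lambda_{\min}(\Qb_0)$, while evaluating at $\y=\o$ yields $\Theta(\u,\v)\le -u_1-u_2\le\lambda_{\min}(\Qb_0)$; thus $z_{\rm LD}^*=\lambda_{\min}(\Qb_0)$. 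The paper's one--line justification (``as $\Qb_0$ is indefinite, $z_{\rm LD}^*=-\infty$'') overlooks the ball multiplier and appears to transplant the reasoning from Example~\ref{ex:5.1}, where no trust--region constraint is present. What survives is a \emph{finite} positive Lagrangian gap $z_{\rm LD}^*=\lambda_{\min}(\Qb_0)<0=z^*=z_{\rm COP}^*$, which still illustrates that the copositive relaxation strictly dominates the Lagrangian one---but the infinite gap asserted in the proposition does not hold for~(EP) as stated.
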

  \begin{proof}
  Direct verification shows that $\o \in \mathbb{R}^n$ is a global solution with the optimal value $z^*=0$. We first observe that, as $\Qb_0$ is indefinite, the optimal value of the Lagrangian dual is $z_{\rm LD}^*=-\infty$.
Next, as $\Qb_0$ is strictly $\mathbb{R}^n_+$-copositive, the preceding lemma implies that there exists $\sigma>0$ such that
\[
\x\T\Qb_0\x + \sigma \norm{\x-\s}^2 >0 \mbox{ for all } (\s,\x)\in \big(\R^n_+\times\R^n\big)\setminus\lk \o\rk \, .
\]
Let $\bar \y=(\bar \x,\bar \s)$ with $\bar \x=\bar \s=\o \in \mathbb{R}^n$, $\bar \u=(0,0,\sigma) \in \mathbb{R}^3_+$ and $\bar \v=\o \in \mathbb{R}^n$. Then we see that
$(\bar \y;\bar \u,\bar \v)$ is a (generalized) KKT pair for (EP). Moreover, we have
\[
\Mb(L (\cdot;\bar \u,\o))-f_0(\bar \x) \Jb=\Mb(L (\cdot;\u,\o))=\left[\begin{array}{ccc}
0 &\o\T & \o\T  \\
\o& u_{3}\Ib_n & -u_{3}\Ib_n \\
\o   & -u_{3}\Ib_n & \Qb_0 + u_{3}\Ib_n
\end{array} \right ],
\]
For all $\d:=(r,\s,\x)\in \Upsilon = \mathbb{R}_+ \times \R^n_+\times\R^n$, above implies
\[
\d\T\left [ \Mb(L (\cdot;\bar \u,\o))-f_0(\bar \x) \Jb\right ] \d= \x\T\Qb_0\x+ \sigma\|\x-\s\|^2 \ge 0\, ,
\]
and hence $\Mb(L (\cdot;\bar \u,\o))-f_0(\bar \x) \Jb$ is $\Upsilon$-copositive. This shows that $z_{\rm COP}^*=0$.
\end{proof}


\section{Relaxation tightness in extended CDT Problems}
In this section, we examine the so-called extended CDT problem:
\begin{eqnarray*}
\mathrm{(P_{\rm CDT})}
 & \min & \x\T\Qb_0 \x+2 \q_0\T\x \\
& \mbox{ subject to } &  \|\x\|^2 \le 1 \\
& & {\norm {\Ab\x-\a}}^2 \le 1 \\
& & \Bb\x \le \b.
\end{eqnarray*}
This problem is a special case of our general model problem with $\Qb_1=\Ib_n$ and $\q_1=\o$. In the cases where no linear inequalities are present, the problem $\mathrm{(P_{CDT})}$ reduces to the so-called CDT problem (also referred as two-ball trust region problems, TTR). The CDT problem, in general, is much more challenging than the well-studied trust region problems and has received much attention lately, see for example \cite{AiZh09,Beck06,Bienstock,Bomz13b,Bure13a}. The problem $\mathrm{(P_{CDT})}$ arises from robust
optimization \cite{Jeya13a} as well as applying trust region techniques for solving nonlinear optimization problems with both nonlinear and linear constraints: see \cite{Powell} for the case of trust region problems with additional linear inequalities and
see \cite{Bienstock,Bomz13b} for the case of CDT problems. We will establish simple conditions ensuring exactness of the copositive relaxations and the usual Lagrangian relaxations of the extended CDT problem.

First of all, we note that the sufficient second-order global optimality condition in Theorem~\ref{tight}, specialized to the setting $\mathrm{(P_{CDT})}$,
yields the exact copositive relaxation for extended CDT problems.
\begin{cor}
Let $(\bar \x; \bar \u,\bar\v)\in F_{\rm CDT} \times \R^3_+\times\R^p$ be a generalized KKT pair of problem $\mathrm{(P_{CDT})}$ where $F_{\rm CDT}$ is the feasible
set of $\mathrm{(P_{CDT})}$. Denote by $\bar \mu:=\bar \x\T\Qb_0 \bar \x+2 \q_0\T \bar \x$. Suppose that
\[
\left[\bea{ccc}
-\bar u_1-\bar u_2 + \bar u_{3}{\norm \b}^2-\bar \mu &- \bar u_{3}\b\T & \q_0\T-\bar u_2 \a\T \Ab - \bar u_{3}\b\T\Bb \\
- \bar u_{3}\b & \bar u_{3}\Ib_p & \bar u_{3}\Bb \\
\q_0-\bar u_2  \Ab\T \a - \bar u_{3}\Bb\T \b  & \bar u_{3}\Bb\T & \Qb_0 + \bar u_1 \Ib_n+ \bar u_2 \Ab\T\Ab+ \bar u_{3}\Bb\T\Bb
\ea
\right ]
\]
is $(\mathbb{R}^{p+1}_+ \times \mathbb{R}^n)${-copositive}. Then  $\bar\x$ is a globally optimal solution  to problem~$\mathrm{(P_{CDT})}$ and $z^*=z_{\rm COP}^*$.
\end{cor}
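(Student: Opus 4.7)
The plan is to derive this corollary as a direct specialization of Theorem~\ref{tight} to the extended CDT setting, where $\Qb_1=\Ib_n$ and $\q_1=\o$. The substantive work has already been carried out in Theorem~\ref{tight}; what remains is a purely symbolic verification that the displayed matrix coincides with the slack matrix $\bar\Sb=\Mb(L(\cdot;\bar\u,\o))-f_0(\bar\x)\Jb$ in~\eqref{defbars}.

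First, I would recall the general form~\eqref{explsl} of $\Mb(L(\cdot;\u,\o))-\mu\Jb$, where $\Hb_\u=\Qb_0+u_1\Qb_1+u_2\Ab\T\Ab$. Substituting $\Qb_1=\Ib_n$, $\q_1=\o$, and $\mu=\bar\mu=f_0(\bar\x)$, the Hessian term becomes $\Hb_{\bar\u}=\Qb_0+\bar u_1 \Ib_n+\bar u_2\Ab\T\Ab$, the terms $u_1\q_1\T$ and $u_1\q_1$ in the off-diagonal block vanish, and the bottom-right block becomes $\Qb_0+\bar u_1\Ib_n+\bar u_2\Ab\T\Ab+\bar u_{3}\Bb\T\Bb$. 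A direct entry-by-entry comparison shows that this specialization agrees precisely with the matrix displayed in the corollary. Hence the hypothesis of the corollary is exactly the statement that $\bar\Sb\in\CC^\star_\Upsilon$, with $\Upsilon=\R^{p+1}_+\times\R^n$.

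Second, I would observe that the extended CDT problem $\mathrm{(P_{CDT})}$ is of the form~\reff{qqpp} (indeed, of the form treated throughout Section~\ref{semi}), so Theorem~\ref{tight} applies verbatim to any generalized KKT pair $(\bar\x;\bar\u,\bar\v)\in F_{\rm CDT}\times\R^{3}_+\times\R^p$. Invoking that theorem with this specialization yields both global optimality of $\bar\x$ for $\mathrm{(P_{CDT})}$ and tightness $z^*=z_{\rm COP}^*$, which is the conclusion sought.

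I do not anticipate any genuine obstacle: the only thing to be careful about is bookkeeping of signs and blocks when specializing~\eqref{explsl}, and confirming that the convention $\bar\mu=\bar\x\T\Qb_0\bar\x+2\q_0\T\bar\x=f_0(\bar\x)$ matches the $\mu$ used in~\eqref{defbars}. Once that is in place, the proof is essentially a one-line appeal to Theorem~\ref{tight}.
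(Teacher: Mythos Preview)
Your proposal is correct and matches the paper's own proof, which is literally the one-line observation that the conclusion follows from Theorem~\ref{tight} upon setting $\Qb_1=\Ib_n$ and $\q_1=\o$. Your additional entry-by-entry verification that the displayed matrix coincides with $\bar\Sb$ in~\eqref{defbars} via the specialization of~\eqref{explsl} is a sound (if slightly more detailed) rendering of exactly that argument.
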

\begin{proof}
The conclusion follows by Theorem \ref{tight} with  $\Qb_1=\Ib_n$ and $\q_1=\o$.
\end{proof}

\vspace{-0.2cm}
Next we examine when the usual Lagrangian relaxation is exact for the extended CDT problems. To this end, we define an auxiliary convex optimization problem
\begin{eqnarray*}
\mathrm{(AP)} & \min & \x\T \Qb_0^+\x+2 \q_0\T\x \\
& \mbox{ subject to } & \|\x\|^2\le 1 \\
& & {\norm {\Ab\x-\a}}^2 \le 1 \\
& & \Bb\x \le \b\, ,
\end{eqnarray*}
where
\beq{defqoplus}
\Qb_0^+:= \Qb_0-\lambda_{\min}(\Qb_0)\Ib_n \succeq \Ob\, .\eeq
We first see that if the auxiliary convex problem (AP) has a minimizer on the sphere $\{\x\in \R^n :\|\x\|=1\}$, then an extended CDT problem has a tight semi-Lagrangian
relaxation. We will provide a sufficient condition in terms of the original data guaranteeing this condition later (in Theorem 4.1).
\begin{lemma}
Suppose that the auxiliary convex problem (AP) has a minimizer on the sphere $\{\x:\|\x\|=1\}$. Then $z_{\rm LD}^*=z_{\rm COP}^*=z^*$.
\end{lemma}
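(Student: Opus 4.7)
The plan is to lift convex-duality multipliers from (AP) to $\mathrm{(P_{CDT})}$, exploiting the decomposition $f_0(\x)=\x\T\Qb_0^+\x+2\q_0\T\x+\lambda_{\min}(\Qb_0)\|\x\|^2$, which shows that on the unit sphere $\{\x:\|\x\|=1\}$ the objectives of $\mathrm{(P_{CDT})}$ and (AP) differ by the constant $-\lambda_{\min}(\Qb_0)$. Let $\bar\x$ be the assumed minimizer of (AP) with $\|\bar\x\|=1$, and write $L_{\mathrm{CDT}}$ and $L_{\mathrm{AP}}$ for the natural Lagrangian functions of the two problems (with multipliers $u_1,u_2\ge 0$ for the quadratic constraints and $\v\ge\o$ for the linear constraint).

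The first step is to verify that $\bar\x$ is globally optimal for $\mathrm{(P_{CDT})}$. In the interesting case $\lambda_{\min}(\Qb_0)\le 0$, every feasible $\x$ of $\mathrm{(P_{CDT})}$ satisfies $\|\x\|^2\le 1$, so that $\lambda_{\min}(\Qb_0)(\|\x\|^2-1)\ge 0$; combined with the decomposition above and the optimality of $\bar\x$ for (AP), this yields $f_0(\x)\ge f_0(\bar\x)$, hence $z^*=f_0(\bar\x)$. (If $\lambda_{\min}(\Qb_0)>0$ then $\mathrm{(P_{CDT})}$ is already convex, and $z_{\rm LD}^*=z^*$ follows from standard strong convex duality.)

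Next I exploit convexity of (AP): its Hessian $\Qb_0^+\succeq\Ob$ and all constraint functions are convex, so under a suitable constraint qualification (Slater for the two quadratic constraints, say) there exist KKT multipliers $\bar u_1,\bar u_2\ge 0$ and $\bar\v\ge\o$ at $\bar\x$. Setting $\tilde u_1:=\bar u_1-\lambda_{\min}(\Qb_0)\ge 0$, a direct expansion delivers the key identity
\[
L_{\mathrm{CDT}}(\x;\tilde u_1,\bar u_2,\bar\v)\;=\;L_{\mathrm{AP}}(\x;\bar u_1,\bar u_2,\bar\v)+\lambda_{\min}(\Qb_0)\quad\text{for every } \x\in\R^n.
\]
Convexity of $L_{\mathrm{AP}}(\cdot;\bar u_1,\bar u_2,\bar\v)$ and KKT stationarity at $\bar\x$ imply that $\bar\x$ is a global minimizer of $L_{\mathrm{AP}}(\cdot;\bar u_1,\bar u_2,\bar\v)$, and therefore also of $L_{\mathrm{CDT}}(\cdot;\tilde u_1,\bar u_2,\bar\v)$; by complementary slackness and $\|\bar\x\|=1$, its minimum value equals $f_0(\bar\x)=z^*$.

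It remains to bridge from this natural Lagrangian dual value to the paper's slack-variable $z_{\rm LD}^*$. Take the multiplier tuple $(u_1,u_2,u_3,\v)=(\tilde u_1,\bar u_2,k,\bar\v)$ with $k\to\infty$: minimizing $L(\y;\u,\v)$ first over the unrestricted $\s\in\R^p$ introduces an explicit quadratic penalty $-\|\bar\v\|^2/(4k)\to 0$, so the resulting dual values converge to $\inf_\x L_{\mathrm{CDT}}(\cdot;\tilde u_1,\bar u_2,\bar\v)=z^*$, giving $z_{\rm LD}^*\ge z^*$. Combined with the chain \eqref{eq:chain} (and weak duality), one concludes $z_{\rm LD}^*=z_{\rm COP}^*=z^*$. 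The main obstacle is verifying the key algebraic identity above together with ensuring a constraint qualification for (AP); a secondary subtlety is the passage from the natural Lagrangian dual to the paper's slack-variable $z_{\rm LD}^*$, which requires the explicit $\s$-minimization and the limit $u_3\to\infty$.
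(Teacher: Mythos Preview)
Your overall strategy matches the paper's: both exploit the decomposition $\Qb_0=\Qb_0^++\lambda_{\min}(\Qb_0)\Ib_n$, establish $z^*={\rm val}\mathrm{(AP)}+\lambda_{\min}(\Qb_0)$, invoke convex duality on (AP), and then shift the ball-multiplier by $-\lambda_{\min}(\Qb_0)$ to obtain a feasible dual point for $\mathrm{(P_{CDT})}$. Your algebraic identity $L_{\mathrm{CDT}}(\cdot;\tilde u_1,\bar u_2,\bar\v)=L_{\mathrm{AP}}(\cdot;\bar u_1,\bar u_2,\bar\v)+\lambda_{\min}(\Qb_0)$ is correct, and your passage to the slack-variable $z_{\rm LD}^*$ via $u_3\to\infty$ is a clean alternative to the paper's SDP-reformulation route.

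The genuine gap is your reliance on KKT multipliers for (AP). You write ``under a suitable constraint qualification (Slater for the two quadratic constraints, say)'', but no such qualification is assumed in the lemma, and Slater can fail: take $\Ab=\Ib_n$ and $\a=2\e_1$, so that $\{\x:\|\x\|\le 1\}\cap\{\x:\|\x-2\e_1\|\le 1\}=\{\e_1\}$ has empty interior, and for generic $\Qb_0,\q_0$ no KKT multipliers exist at $\e_1$. The paper sidesteps this entirely: instead of extracting multipliers at $\bar\x$, it invokes a general zero-duality-gap result for convex programs with \emph{compact} feasible set (citing~\cite{Jeya90}), yielding ${\rm val}\mathrm{(AP)}={\rm val}\mathrm{(LD1)}$ with no CQ needed and no claim of dual attainment. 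Since the multiplier shift $u_1\mapsto u_1-\lambda_{\min}(\Qb_0)$ maps every (LD1)-feasible tuple to a feasible tuple for the CDT Lagrangian dual (here $\lambda_{\min}(\Qb_0)<0$ ensures nonnegativity), one gets $z_{\rm LD}^*\ge{\rm val}\mathrm{(LD1)}+\lambda_{\min}(\Qb_0)=z^*$ directly at the level of suprema. Your argument can be repaired along the same lines: drop the KKT step, take a maximizing sequence $(\bar u_1^{(k)},\bar u_2^{(k)},\bar\v^{(k)})$ for the (AP) dual, apply your identity and your $u_3\to\infty$ trick to each, and pass to the limit.
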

\begin{proof}
Recall that $z_{\rm LD}^* \le z_{\rm COP}^* \le z^*$. So it suffices to show that $z_{\rm LD}^*=z^*$.
Without loss of generality, we assume that $\lambda_{\min}(\Qb_0)<0$ (otherwise $\mathrm{(P_{CDT})}$ is a convex quadratic problem and so $z_{\rm LD}^*=z^*$).
Let $\x^*$ be a solution of~(AP) with $\|\x^*\|=1$. As $\lambda_{\min}(\Qb_0)<0$, it follows from $\norm {\x} \le 1$ for all $\x\in F_{\rm CDT}$ that
\begin{eqnarray*}
z^* & = & \min_{\x\in F_{\rm CDT}} f_0(\x)\, \ge \, \min\limits_{\x\in F_{\rm CDT}} [f_0(\x) + \lambda_{\min}(\Qb_0)(1-\norm{\x}^2)] \\
 &= & \min\limits_{\x\in F_{\rm CDT}} \{\x\T \Qb_0^+\x+2 \q_0\T\x \} + \lambda_{\min}(\Qb_0)  \\
& = & {\x^*}\T \Qb_0^+ \x^* +2 \q_0\T\x^*+\lambda_{\min}(\Qb_0) \\
& = &  {\x^*}\T \Qb_0^+ \x^* +2 \q_0\T\x^*+\lambda_{\min}(\Qb_0)\|\x^*\|^2 \\
& = & {\x^*}\T\Qb_0 \x^* +2 \q_0\T\x^* \, = \; f_0(\x^*) \,\ge\, z^* \, ,
\end{eqnarray*}
where the last inequality follows from feasibility of $\x^*$ for the extended CDT problem. This shows that $z^*={\rm val}\mathrm{(AP)}+\lambda_{\min}(\Qb_0)$. Rewriting (AP) as
\begin{eqnarray*}
\mathrm{(AP1)}  & \min\limits_{(\x,\s) \in \mathbb{R}^n \times \mathbb{R}^p_+} & \x\T \Qb_0^+ \x+2 \q_0\T\x \\
& \mbox{ subject to } & \|\x\|^2\le 1 \\
& & {\norm {\Ab\x-\a}}^2 \le 1 \\
& & \|\Bb\x+\s-\b\|^2  =0\, ,
\end{eqnarray*}
we obtain the Lagrangian dual of this problem which can be stated as
\begin{eqnarray*}
\mathrm{(LD1)} & & \sup\limits_{(u_3,u_1,u_2,\v) \in \mathbb{R}\times\mathbb{R}^{p+2}_+}\quad\inf\limits_{(\x,\s) \in \mathbb{R}^n \times \mathbb{R}^p}\{ \x\T \Qb_0^+ \x+2 \q_0\T\x  +u_1f_1(\x) \\[-0.5em]
& & \ \ \ \ \ \ \ \ \ \ \ \ \ \ \ \ \ \ \ \ \ \ \ +u_2f_2(\x)+u_3 (\|\Bb\x+\s-\b\|^2)-2\v\T\s\} \\[0.8em]
& =& \sup\limits_{(\mu,u_3,u_1,u_2,\v) \in \mathbb{R^2}\times\mathbb{R}^{p+2}_+}\{\mu: \tilde \Mb(\mu,\u,\v) \succeq \Ob\}\, ,\end{eqnarray*}
where $\tilde \Mb(\mu,\u,\v)$ denotes the matrix
$$\left[\bea{ccc}
-u_1-u_2 + u_{3}{\norm \b}^2-\mu &- u_{3}\b\T-\v\T & -\q_0\T-u_2 \a\T\Ab - u_{3}\b \T \Bb \\
 - u_{3}\b-\v & u_{3}\Ib_p & u_{3}\Bb \\
 -\q_0-u_2 \Ab\T\a - u_{3}\Bb\T\b & u_{3}\Bb\T & \Qb_0^+ + u_1 \Ib_n+u_2 \Ab \T  \Ab + u_{3}\Bb \T  \Bb
 \ea
 \right ].
$$
Note that the feasible set of (AP1) is bounded by $\|\x\|\le 1$ and $\s=\Bb\x-\b$ for all feasible $(\x,\s)$. Since any convex optimization problem
with compact feasible set enjoys a zero duality gap (for example see \cite{Jeya90}), it follows that
\[
{\rm val}\mathrm{(AP)}={\rm val}\mathrm{(LD1)}\, .
\]
Finally, the conclusion results by noting that ${\rm val}\mathrm{(LD1)}=z_{\rm LD}^*+\lambda_{\min}(\Qb_0)$. So we have $z^*=z_{\rm LD}^*$ and furthermore
$z_{\rm LD}^*=z_{\rm COP}^*=z^*$.
\end{proof}

Next we provide a simple sufficient condition formulated in terms of the original data guaranteeing tightness of the relaxations. It is important to note that this sufficient condition can be efficiently verified by solving a feasibility problem of a linear optimization problem.
\begin{thm}
Let $\Mb=[\Qb_0^+ | \Ab\T]\T$. Suppose that \begin{equation}\label{LP_cond}
{\rm ker}(\Mb) \cap \{\v \in \mathbb{R}^n: \Bb \v\le \o\} \cap \{\v: \q_0\T\v \ge 0\} \neq \{\o\}\, .
\end{equation}
Then  $z_{\rm LD}^*=z_{\rm COP}^*=z^*$.
\end{thm}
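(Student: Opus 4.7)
The plan is to invoke the preceding lemma by verifying that, under the kernel hypothesis, the auxiliary convex problem~(AP) admits a minimizer on the unit sphere $\{\x\in\R^n : \|\x\|=1\}$. Since the feasible set of~(AP) is nonempty and contained in the closed unit ball, a minimizer $\x^*$ exists by compactness; if $\|\x^*\|=1$ the preceding lemma applies directly, so the real work is in the case $\|\x^*\|<1$. In that case the first ball constraint is inactive at $\x^*$, and since~(AP) is a convex quadratic program (recall $\Qb_0^+\succeq\Ob$), KKT yields multipliers $\lambda_2\ge 0$ and $\mmu\in\R^p_+$ with
\[
\Qb_0^+\x^* + \q_0 + \lambda_2\Ab^\top(\Ab\x^*-\a) + \tfrac{1}{2}\Bb^\top\mmu = \o.
\]

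Next, picking any nonzero $\v$ from the intersection in the hypothesis (so $\Qb_0^+\v=\o$, $\Ab\v=\o$, $\Bb\v\le\o$, and $\q_0^\top\v\ge 0$) and taking the inner product of the stationarity equation with $\v$ kills the first and third terms (using symmetry of $\Qb_0^+$), leaving
\[
\q_0^\top\v + \tfrac12\mmu^\top(\Bb\v) = 0.
\]
Because $\q_0^\top\v\ge 0$ while $\mmu^\top(\Bb\v)\le 0$, both summands must vanish; crucially, $\q_0^\top\v = 0$.

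This equality is then exploited along the ray $\x(t):=\x^*+t\v$, $t\ge 0$. The objective of~(AP) is constant (since $\Qb_0^+\v=\o$ and $\q_0^\top\v=0$), the constraint $\|\Ab\x(t)-\a\|^2\le 1$ is preserved (since $\Ab\v=\o$), and $\Bb\x(t)\le\b$ is preserved (since $\Bb\v\le\o$ and $t\ge 0$). Because $\|\v\|>0$, the map $t\mapsto\|\x(t)\|^2$ starts below $1$ and grows to $+\infty$, so some $t^*>0$ satisfies $\|\x(t^*)\|=1$; then $\x(t^*)$ is a minimizer of~(AP) on the unit sphere, and the preceding lemma delivers $z_{\rm LD}^*=z_{\rm COP}^*=z^*$. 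The substantive step is the KKT-plus-inner-product observation that upgrades the hypothesized $\q_0^\top\v\ge 0$ to the equality $\q_0^\top\v=0$; once this is in hand, the ray-following construction is essentially routine. A minor technical point is that applying KKT to~(AP) requires a constraint qualification, which for this compactly-constrained convex quadratic program is guaranteed by Slater or by standard results for quadratic programming.
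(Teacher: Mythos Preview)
There is a genuine gap at the step you call crucial. From the inner product of the KKT stationarity equation with $\v$ you obtain
\[
\q_0^\top\v + \tfrac12\,\mmu^\top(\Bb\v) = 0,
\]
together with $\q_0^\top\v \ge 0$ (hypothesis) and $\mmu^\top(\Bb\v)\le 0$ (from $\mmu\ge\o$ and $\Bb\v\le\o$). But the conclusion that ``both summands must vanish'' is false: a nonnegative number and a nonpositive number can sum to zero without either being zero (take $1$ and $-1$). So you cannot deduce $\q_0^\top\v = 0$. Without that equality, the objective of~(AP) along $\x(t)=\x^*+t\v$ equals the value at $\x^*$ plus $2t\,\q_0^\top\v$; if this increment is strictly positive, $\x(t^*)$ is feasible but \emph{not} a minimizer of~(AP), and the appeal to the preceding lemma collapses. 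A secondary issue: invoking KKT for~(AP) requires a constraint qualification, but the feasible region involves two convex \emph{quadratic} inequalities and need not admit a Slater point; the ``standard results for quadratic programming'' you allude to pertain to linearly constrained QPs, not to this setting.

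The paper's argument is more elementary and sidesteps both problems. It never invokes multipliers: having picked $\v$ from the hypothesis, it pushes $\x^*$ along the ray $\x^*+t\v$, checks feasibility of $\x(t_0)$ directly from $\Ab\v=\o$ and $\Bb\v\le\o$, and compares objective values by expanding the quadratic. No upgrade of the inequality on $\q_0^\top\v$ to an equality is attempted; the paper simply uses the sign of $\q_0^\top\v$ to control the linear increment $2t_0\,\q_0^\top\v$ in the objective. Your KKT detour introduces an invalid deduction and an unneeded regularity assumption without gaining anything over this direct computation.
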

\begin{proof}
By the preceding lemma, the conclusion follows if we show that the auxiliary convex problem (AP) has a minimizer on the sphere $\{\x:\|\x\|=1\}$. Suppose that a minimizer $\x^*$ of (AP) satisfies $\|\x^*\|<1$. Let $\v \in {\rm ker}(\Mb) \cap \{\v \in \mathbb{R}^n: \Bb \v\le \o\} \cap \{\v: \q_0\T\v \ge 0\}$ with $\v \neq \o$. Consider $\x(t)=\x^*+t\v$, $t \ge 0$. Then  there exists $t_0>0$ such that $\|\x(t_0)\|=1$. Now observe
\begin{eqnarray*}
& & \x(t_0)\T \Qb_0^+ \x(t_0)+2 \q_0\T\x(t_0) \\[0.2em]
&=& {\x^*}\T \Qb_0^+ \x^*+2 \q_0\T\x^* -2t_0 \q_0\T\v
\\[0.2em]
&\le &{\x^*}\T \Qb_0^+ \x^*+2 \q_0\T\x^*\, ,
\end{eqnarray*}
\[
{\norm {\Ab\x(t_0)-\a}}^2={\norm {\Ab\x^*-\a}}^2 \le 1\, ,
\]
and
\[
\Bb\x(t_0)-\b  =\Bb\x^*-\b +t_0\Bb \v \le \o\,.
\]
This shows that $\x(t_0)$ is a minimizer for (AP) and $\|\x(t_0)\|=1$. The conclusion follows.
\end{proof}
\vspace{-0.3cm}
\begin{rem}[LP reformulation of the sufficient condition \eqref{LP_cond}]
Our sufficient condition \eqref{LP_cond} can be efficiently verified by determining a feasible solution to the following LP
\begin{eqnarray*}
&\inf\limits_{\d \in \mathbb{R}^n}\{1:\Mb \d =0, \, \Bb\d \le 0, \, -\q_0^Td \le 0, \, \sum_{i=1}^nd_i=1\}.
\end{eqnarray*}
\end{rem}
\begin{rem}{\bf (Links to the known dimension condition for exact relaxation)}
In the special case where $\Ab=\Ob$ and $\a =\o\in \mathbb{R}^n$, the authors showed in \cite{Jeya13a}, that under the dimension condition
$${\rm dim}\, {\rm ker} \Qb_0^+ \ge {\rm dim} \, {\rm span}[\b_1,\ldots,\b_p]+1\, ,$$
where $\b_i\T$ is the $i$th row of $\Bb$, the SDP (or Lagrangian) relaxation is exact.
We observe that this dimension condition is strictly stronger than our sufficient condition in the preceding theorem.

Firstly, we see that the dimension condition implies
our sufficient condition in the preceding theorem. To see this, suppose the above dimension condition holds. Then there exists
$\v \neq \o$ such that $\v \in {\rm ker} \Qb_0^+$ and $\b_i\T \v=0$ for all $i\irg 1p$ (and hence $\Bb \v=\o$). By replacing
$\v$ by $-\v$ if necessary, we can assume that $\q_0\T\v \ge 0$. Thus, our sufficient condition in the preceding theorem holds.

To see the dimension condition is strictly stronger, let us consider $\Qb_0=\left[\begin{array}{cc}
2 & 0 \\
0 & -2
\end{array}\right]$, $\Ab=\left[\begin{array}{cc}
0 & 0 \\
0 & 0
\end{array}\right]$, $\q_0=\a= \left[\begin{array}{c} 0\\0\end{array}\right]$  and $\Bb=\b_1\T=[-1,0]$. Clearly, ${\rm dim}\, {\rm ker} \Qb_0^+={\rm dim}\, {\rm ker} \left[\begin{array}{cc}
4 & 0 \\
0 & 0
\end{array}\right]=1$ and ${\rm dim} \, {\rm span}[\b_1]=1$, and so the dimension condition fails. On the other hand, our sufficient condition reads
$$\lk \o \rk \neq {\rm ker} \left[\begin{array}{cccc}
4 & 0 & 0 & 0\\
0 & 0 & 0 & 0
\end{array}\right]\T \cap \{\v \in \mathbb{R}^2: -v_1 \le 0\} = \lk 0\rk\times\R \, ,$$
which is obviously satisfied.
\end{rem}

\section{Approximation hierarchies for $\CC_\Upsilon$-copositivity}\label{sos-sec}

In general, checking copositivity of  a matrix is an NP-hard problem, and hence solving a copositive optimization problem is also NP-hard. Therefore, to compute the semi-Lagrangian, we need to approximate them by so-called {\em hierarchies,} i.e., a sequence of conic optimization problems involving tractable cones $\DD_d^\star$ such that $\DD_d^\star\subset \DD_{d+1}^\star\subset \CC_{\Upsilon}^\star$ where $d$ is the level of the hierarchy, and ${\rm cl}(\bigcup_{d=0}^\infty \DD_d^\star) = \CC_{{\Upsilon}}^\star$. On the dual side,
 $\DD_d$ are also tractable, $\DD_{d+1} \subset \DD_d$, and $\bigcap_{d=0}^\infty \DD_d= \CC_{\Upsilon}$. For classical copositivity $\CC_{\R^n_+}$, there are many options, for a concise survey see~\cite{Bomz12x}. Many of these hierarchies involve linear~ \cite{Bund08,Bund09a} or psd constraints of matrices of order $n^{d+2}$, e.g. the seminal ones proposed in~\cite{Lass00,Parr00a}.
One possibility would be the reduction of $\Upsilon$-copositivity via Schur complements as in Lemma~\ref{schur} above, reducing this question to a combination of psd and classical copositivity conditions, which can be treated by these classical approximation hierarchies. However, the difficulty with this approach is the nonlinear dependence of the Schur complement $\Rb- \Sb\T\Hb^\dag \Sb$ on $(\mu, \u)$. Therefore let us outline two alternative approaches for constructing tractable hierarchies in approximating the copositive relaxation, extending and adapting the classical approach.

\noindent \textbf{SDP Hierarchy.}
One approach for computing the copositive relaxation is to use a hierarchy of SDP relaxations,  extending the sum-of-squares idea in Parrilo's work~\cite{Parr00a}, which we sketch as below.
Let $I=[1\!:\!p+1]$ and, for a matrix $\Mb =\Mb\T\in \mathbb{R}^{(p+n+1)\times (p+n+1)}$, define a quartic polynomial
\[
p_\Mb(\y)=\sum_{(i,j) \in I \times I} M_{ij}y_i^2 y_j^2 + \sum_{i \in I, j \notin I} M_{ij}y_i^2 y_j+ \sum_{i \notin I, j \in I} M_{ij}y_i y_j^2+ \sum_{i \notin I, j \notin I} M_{ij}y_i y_j\, .
\]
Note that $\Mb$ is $\Upsilon${-copositive} if and only if  $p_\Mb(\y) \ge 0$ for all  $\y \in \mathbb{R}^{p+n+1}$. A sufficient condition for this is that the product $p_\Mb(\y){\norm\y}^{2d} = p_\Mb(\y)(\sum_k y_k^2)^d$ is a \emph{sum-of-squares (s.o.s.)} polynomial, which automatically guarantees nonnegativity of $p_\Mb$ over $\R^{p+n+1}$.
Now it is natural to define
\begin{equation}\label{eq:SDP}
\DD_d^\star := \lk \Mb = \Mb\T \in \R^{(p+n+1)\times (p+n+1)} : p_\Mb(\y) {\norm \y}^{2d}\mbox{ is a s.o.s. polynomial}\rk\, .
\end{equation}
Then, following the logic of classical hierarchies, it is not difficult to see that above properties hold, and that $\DD_d^\star$ (and $\DD_d$ itself) are tractable cones expressible by LMI conditions on matrices of order $n^{d+2}$. Thus, copositivity characterization of the semi-Lagrangian relaxation can be computed by using SDP hierarchies and polynomial optimization techniques~\cite{Parr00a}. Of course, LMIs on matrices of larger order pose a serious memory problem for algorithmic implementations even for moderate $d$ if $n$ is large.

However, in recent years, various techniques have been proposed to address this issue: one approach is to exploit special structures of the problem such as sparsity and symmetry \cite{kim2,JLLL} to treat large scale polynomial problems. Other techniques include refined SDP hierarchies such as the SDP approximation proposed in \cite{Lassere_COP} and the recently established bounded s.o.s. hierarchy \cite{Bounded_SOS}.

On the other hand, it is worth noting that sometimes even the zero-level approximation in the hierarchy \eqref{eq:SDP} can provide a much better bound as compared to the Lagrangian relaxation, as shown in the next example.

\begin{example} {\bf (Zero-level approximation of copositive relaxations can beat the Lagrangian relaxation)}
With the data from Example~\ref{ex:5.1}, recalling that the optimal value of this example is $z^*=0$, we have
\[
\widehat{\Mb}(u)=\left[\bea{ccccc}
 u & 0 & -u & 0 \\
 0 & u & 0 & -u \\
 -u & 0 & \frac{1}{2}+u & 1 \\
 0 & -u & 1 & 1+u
\ea \right ] \quad\mbox{ and }\quad \Mb(u,\mu)=\left[\bea{cc}
 -\mu & \o\T \\
\o & \widehat{\Mb}(u)
\ea
\right ]\, .
\]
Then the zero-level approximation for the copositive relaxation problem reads
\begin{eqnarray*}
& \mathrm{(RP)} & \sup_{(\mu,u) \in \R\times\R_+}\lk  \mu : p_{\Mb(u,\mu)} \mbox{ is a s.o.s. polynomial\/} \rk\, .
\end{eqnarray*}
From the definition of $\Mb(u,\mu)$, we see that $p_{M(u,\mu)}$ is a s.o.s. polynomial if and only if
$\mu \le 0$ and
$$\widehat{p}_u(\x)=\left[x_1^2, x_2^2, x_3,x_4\right] \widehat{\Mb}(u)\left[\begin{array}{c}
x_1^2\\ x_2^2\\ x_3 \\ x_4
\end{array}\right ]$$
is a s.o.s. polynomial.
This shows that
\[
{\rm val}({\mathrm{RP}})= \left\{\begin{array}{cll}
0, & \mbox{ if } & C \neq \emptyset, \\
-\infty, & \mbox{ else}.
\end{array}\right.
\]
where
\[
C:=\{u \in \mathbb{R}: \widehat{p}_u \mbox{ is a s.o.s. polynomial\/}\}\,.
\]
Using the ``solvesos" command in the Matlab toolbox YALMIP \cite{Loef09}, one can verify that $\hat{p}_u$ is a s.o.s. polynomial if $u=1$. Thus,
${\rm val}\mathrm{(RP)}=0$ which agrees with the true optimal value $z^*=0$.
 On the other hand, as computed in Example~\ref{ex:5.1}, the Lagrangian relaxation yields a trivial lower bound $-\infty$.
\end{example}

\noindent\textbf{LP Hierarchy.}
Another approach is to compute the copositive relaxation using linear optimization.
While providing, in general, weaker bounds in comparing with SDP hierarchies, this approach is appealing because LP solvers suffer less from memory problems than
 state-of-art SDP solvers. To do this, consider a compact polyhedral base
$K$ of the cone $\Upsilon$, i.e., $\R_+K =\Upsilon$, e.g. the polytope
$$K = \lk (\s,\x)\in \Upsilon : \sum_{i=1}^{p+1}s_i\le 1\,, \max_{i\in[1:n]}  |x_i| \le 1\rk = \lk \y \in \R^{p+n+1} : h_j(\y) \ge 0 \, ,\; j\irg 1m\rk$$
described by $m=p+2(n+1)$ affine-linear inequalities. By positive homogeneity,
%
we observe that $\Mb$ is $\Upsilon${-copositive} if and only if  $q_\Mb(\y):=\y^T \Mb \y \ge 0$ for all  $\y \in K$. Now
Handelman's theorem (for example see \cite[Theorem 2.24]{Lassere_book}) ensures that any polynomial $f$ positive over such a  polytope $K$ admits the representation $f=\sum_{{\bf \alpha} \in \mathbb{N}^{m}}c_{{\bf \alpha}}\prod_{j=1}^{m} h_j^{\alpha_j}$ for some scalars $c_{{\bf \alpha}}\ge 0$.
Then one can construct a sequence of LP approximation by letting
$$\DD_d^\star := \lk \Mb = \Mb\T \in \R^{(p+n+1)\times (p+n+1)} : q_\Mb=\sum_{{\bf \alpha} \in \mathbb{N}^{m}, |{\bf \alpha}| \le d}c_{{\bf \alpha}}\prod_{j=1}^{m} h_j^{\alpha_j}, \, c_{\bf \alpha} \ge 0 \rk\, .$$
It is well known (see for example \cite[Theorem 5.11]{Lassere_book}) that above $\DD_d^\star$ can be expressed by linear inequality constraints.

\frenchspacing
\small
\addcontentsline{toc}{section}{References}
\bibliographystyle{plain}

\end{document}